\newtheorem{theo}{Theorem}[section]
\newtheorem{lem}{Lemma}[section]
\newtheorem{rk}{Remark}[section]
\newtheorem{coro}{Corollary}[section]
\numberwithin{equation}{section}
\def\il{\int}
\def\calA{{\cal{A}}}
\def\R{{\mathbb{R}^d}}
\def\calE{{\cal{E}}}
\def\calC{{\cal{C}}}
\newcommand{\bb}{B_\beta}
\newcommand{\cf}{\frac{\calA{(d,\alpha)}}{2}}
\newcommand{\Wo}{W_0^{\alpha/2,2}(\Omega)}
\newcommand{\varp}{\varphi}
\newcommand{\lam}{\lambda}
\newcommand{\Om}{\Omega}
\newcommand{\alp}{\alpha}
\newcommand{\phiok}{\varp_0^{(k)}}
\newcommand{\phiv}{\varp_0^{V}}
\newcommand{\phis}{\varp_0^{*}}
\newcommand{\lamok}{\lam_0^{(k)}}
\newcommand{\lamv}{\lam_0^{V}}
\begin{document}
\bibliographystyle{alpha}

\title{Pointwise estimates for the ground states of singular Dirichlet fractional Laplacian}

\author{\normalsize Ali Beldi
, Nedra Belhajrhouma 
 \& Ali BenAmor\footnote{corresponding author} 
}

\date{}
\maketitle
\begin{abstract} We establish sharp pointwise estimates for the ground states of some singular fractional Schr\"odinger operators on relatively compact Euclidean subsets. The considered operators are of the type $(-\Delta)^{\alpha/2}|_\Om-c|x|^{-\alpha}$, where $(-\Delta)^{\alpha/2}|_\Om$ is the fraction-Laplacien
on an open subset $\Om$ in $\R$ with zero exterior condition and $0<c\leq(\frac{d-\alpha}{2})^2$. The intrinsic ultracontractivity property for such operators is discussed as well and a sharp large time asymptotic for their heat
kernels is derived.
\end{abstract}
{\bf Key words}: Improved Sobolev inequality, ground state, intrinsic ultracontractivity, Dirichlet form.

\section{Introduction}
Let  $L_0:=(-\Delta)^{\alpha/2}|_\Om,\ 0<\alpha<\min(2,d)$ be the fractional Laplacien on an open bounded subset $\Om\subset\R$ with zero exterior condition in $L^2(\Om,dx)$. It is well known  that $L_0$ has purely discrete spectrum $0<\lam_0<\lam_1<\cdots<\lam_k\to\infty$ and that the associated semigroup $T_t:=e^{-tL_0}, t>0$ is irreducible. Hence $L_0$ has a unique strictly positive normalized ground state $\varp_0$. Moreover according to Chen--Kim--Song \cite[Eq.(4.1)]{kim2010} if $\Om$ is a $C^{1,1}$ domain (in he sense defined in \cite{kim2010}) then, $\varphi_0$ enjoys  the property of being comparable to the function $\delta^{\alpha/2}(x)$, where  $\delta(x)$ is  the Euclidian  distance function between  $x$ and the  boundary of $\Om$ which we denote by $\partial\Om$. In other words
\begin{eqnarray}
\varphi_0\sim\delta^{\alpha/2}\ {\rm on}\ \Om.
\label{similar-free}
\end{eqnarray}
Furthermore Kulczycki proved in \cite{kul} that the semigroup $T_t,\ t>0$ is intrinsically ultracontractive
(IUC for short) regardless the regularity of $\Om$. The latter property induces among others the large time
asymptotic for the heat kernel $p_t$ of  $e^{-tL_0},\ t>0$:
\begin{eqnarray}
p_t(x,y)\sim e^{-t\lam_0}\varphi_0(x)\varphi_0(y),\ {\rm on}\ \Om\times\Om.
\end{eqnarray}
Such type of estimates are very important in the sense that they give precise information on the local behavior of the ground state and the heat kernel (for large $t$) as well as on their respective rates of decay at the boundary.\\
Set $G$ the Green's Kernel of $L_0^{-1}$ and let $V\geq 0\ a.e.$ be a potential such that
\begin{eqnarray}
\int_\Om G(x,y)V(y)\,dy<1,
\end{eqnarray}
( a Kato potential for example ) and define
\begin{eqnarray}
L_V:=L_0 - V.
\end{eqnarray}
Owing to a resolvent formula it is possible to show that $L_V$ still has the same spectral properties of $L_0$. In particular it has a unique striclty positive (continuous if $V$ is Kato) and bounded ground state $\varp_0^V$. On the other side if the Green kernels $G$ of $L_0$ and $G^V$ of $L_V$ are comparable (it is the case for $C^{1,1}$ domains at least, see \cite[Proposition 9.3, Theorem 2.4]{Hansen-comp}) we conclude that $\varp_0$ and $\varp_0^V$ are comparable as well. In particular we still have
\begin{eqnarray}
\varphi_0^V\sim\varphi_0\sim\delta^{\alpha/2}\ {\rm if}\ \Om\ {\rm is}\ C^{1,1}.
\end{eqnarray}
The comparability between $\varp_0^V$ and $\varp_0$ still holds true if one replaces $V$ by a positive measure which is in the Kato class or is potentially small, i.e.,
\begin{eqnarray}
K^\mu 1:=\int_\Om G(\cdot,y)\,d\mu(y)<1,
\end{eqnarray}
provided the Green kernels $G$ and $G^\mu$ are comparable. For conditions insuring comparability of Green functions we refer the reader to \cite{Hansen-comp}.\\
Potentials of the type
\begin{eqnarray}
\frac{c}{|x|^r},\ 0\leq r<\alpha,
\end{eqnarray}
enter in the latter category.\\
However, for the limiting power $r=\alpha$ they do not fit into any of the mentioned type of potentials if $0\in\Om$. Indeed, for this case one has
\begin{eqnarray}
{\rm ess}\sup_\Om \int_\Om G(\cdot,y)|y|^{-\alp}\,dy=\infty.
\end{eqnarray}
One of our  aims in this paper is to prove that in the latter case, the ground state has singularities, describe them as well as their decay at the boundary for an open bounded subset $\Om$ containing the origin.\\
Precisely we shall prove that for
\begin{eqnarray}
V(x)=\frac{c}{|x|^\alpha},\ 0<c\leq c^*:=(\frac{d-\alpha}{2})^2,\ L_V:=L -V,
\end{eqnarray}
the operator $L_V$ still has discrete spectrum, a unique normalized ground state $\varp_0^V>0\ a.e.$ and there is $0<s\leq\frac{d-\alpha}{2}$ such that
\begin{eqnarray}
\varphi_0^V\sim\delta^{\alpha/2}|\cdot|^{-s}\ {\rm if}\ \Om\ {\rm is}\ C^{1,1}.
\end{eqnarray}
We shall however,  prove that the intrinsic ultracontractivity property is still preserved. Namely, the operator $e^{-tL_V},\ t>0$ is IUC for domains which are less regular than $C^{1,1}$ domains.\\
For $\alpha=2$ (the local case), various types of comparison results as well as pontwise estimates for ground states of the Dirichlet-Schr\"odinger operator were obtained in \cite{zuazua, dupaigne-nedev,davilla-dupaigne,cipriani-grillo,davies-book} and in \cite{benamor-osaka} for more general potentials in the framework of (strongly) local Dirichlet. Whereas the preservation of the intrinsic ultracontractivity can be found in \cite{banuelos91} for Kato potentials, in \cite{cipriani-grillo} and in \cite{benamor-osaka} in the framework of (strongly) local Dirichlet.\\
Although we shall focus on the very special case $V_c=\frac{c}{|x|^\alpha}, 0<c\leq c^*$,
our results are still valid in the following more general situations:
\begin{enumerate}
\item For positive potentials $V$ such that $V$ is bounded away from the origin and
\begin{eqnarray}
\kappa'V_{c^*}\leq V\leq \kappa V_{c^*}\ a.e.\ {\rm near}\ 0,\ \kappa\leq 1.
\end{eqnarray}
\item For signed potentials $V=V^+-V^-\in L^1_{loc}$ such that $V^-$ is of the type described  in 1.
\end{enumerate}
Our method relies  basically on an improved Sobolev inequality together with  a transformation argument (Doob's transformation) which leads to a generalized ground state representation.\\
The paper is organized as follows: In section2 we give the backgrounds together with some preparing results. For the comparability of the ground states we shall consider two situations separately: the subcritical (section3) and the critical case (section4). The last section is devoted to intrinsic ultracontractvity.
\section{Preparing results}
We first give some preliminary results that are necessary for the later development of the paper. Some of them are known. However, for the convenience of the reader we shall give new proofs for them.\\
Let $0<\alpha<\min(2,d)$. Consider the quadratic form $\calE^\alpha$ defined in $L^2:=L^2(\R,dx)$ by
\begin{eqnarray}
\calE^\alpha(f,g)&=&\frac{1}{2}\calA{(d,\alpha)}\il_{\R}\il_{\R} \frac{(f(x)-f(y))(g(x)-g(y))}{|x-y|^{d+\alpha}}\,dxdy,\nonumber\\
D(\calE^\alpha)&=&W^{\alpha/2,2}(\R)
=\{  f\in L^2(\R)\colon\,\calE^\alpha[f]:=\calE^\alpha(f,f)<\infty\},\,
\end{eqnarray}
where
\begin{eqnarray}
\calA{(d,\alpha)}=\frac{\alpha\Gamma(\frac{d+\alpha}{2})}
{2^{1-\alpha}\pi^{d/2}\Gamma(1-\frac{\alpha}{2})}.
\label{analfa}
\end{eqnarray}
It is well known that $\calE^\alpha$ is a transient Dirichlet form and  is related (via Kato representation theorem) to the selfadjoint operator, commonly named the  $\alpha$-fractional Laplacian on  $\R$ which we shall denote by  $ (-\Delta)^{\alpha/2}$.\\
Alternatively, the expression of the operator  $(-\Delta)^{\alpha/2}$ is given by (see \cite[Eq.3.11]{bogdan})
\begin{equation}\label{Lapf}
    (-\Delta)^{\alpha/2}f(x)= \calA{(d,\alpha)} \lim_{\epsilon\rightarrow 0^+}\int_{\{y\in{\mathbb{R}^d}, |y-x|>\epsilon\}} {\frac{f(x)-f(y)}{|x-y|^{d+\alpha}} dy},
\end{equation}
provided the limit exists and is finite.\\
From now on we shall ignore in the notations the dependence on $\alpha$ and shall set $\int\cdots$ as a shorthand for $\int_{\R}\cdots$. The notation q.e. means quasi everywhere with respect to the capacity induced by $\calE$.\\
For every open  subset  $\Om\subset\R,$ we  denote by $L_0:=(-\Delta)^{\alpha/2}|_\Om$ the localization of  $(-\Delta)^{\alpha/2}$ on $\Om$, i.e., the operator which Dirichlet form in $L^2(\overline\Om,dx)$ is given by
\begin{eqnarray*}
D(\calE_\Om)&=&W_0^{\alpha/2}(\Om)\colon=\{f\in W^{\alpha/2,2}(\R)\colon\, f=0 ~~~\calE-q. e.~on~\Om^c\}\nonumber\\
\calE_\Om(f,g)&=&\frac{1}{2}\calA{(d,\alpha)}\int
\int\frac{(f(x)-f(y))(g(x)-g(y))}{|x-y|^{d+\alpha}}\,dxdy\nonumber\\
&=&\frac{1}{2}\calA{(d,\alpha)}\big(\int_\Om\int_\Om \frac{(f(x)-f(y))(g(x)-g(y))}{|x-y|^{d+\alpha}}\,dxdy+\int_\Om f(x)g(x)\kappa_\Om^{(\alpha)}(x)\,dx\big),
\end{eqnarray*}
where
\begin{eqnarray}
\kappa_\Om^{(\alpha)}(x):=\calA(d,\alpha)\int_{\Om^c}\frac{1}{|x-y|^{d+\alpha}}\,dy.
\end{eqnarray}
%
The Dirichlet form $\calE_\Om$ coincides with the closure of $\calE$ restricted to $C_c^\infty(\Om)$, and is therefore regular and furthermore transcient.\\
We also recall the known fact  that $L_0$ is irreducible even when $\Om$  is disconnected \cite[p.93]{bogdan}.\\
If moreover $\Om$ is bounded, thanks to the well known Sobolev embedding,
\begin{eqnarray}
\big(\int_{\Om}|f|^{\frac{2d}{d-\alpha}}\,dx\big)^{\frac{d-\alpha}{d}}\leq
C(\Om,d,\alpha)\calE_{\Om}[f],\ \forall\,f\in\Wo,
\label{sobo-free}
\end{eqnarray}
the operator $L_0$ has compact resolvent (that we shall denote by $K:=L_0^{-1}$) which together with the irreducibility property imply that there is a unique continuous bounded, $L^2(\Om,dx)$ normalized function $\varp_0>0$ and $\lam_0>0$ such that
\begin{eqnarray}
L_0\varp_0=\lam_0\varp_0\ {\rm on}\ \Om.
\end{eqnarray}
We shall prove that this property of $L_0$ is still preserved by perturbations of the form $c|x|^{-\alpha}$. However, singularities will appear for the ground state of the perturbed operator provided $\Om$ contains the origin.\\
Owing to the sharpness of the constant (see for example \cite{yafaev})
\begin{eqnarray}
\frac{1}{c^*}:=
\frac{\Gamma^2(\frac{d-\alpha}{4})}{2^\alpha\Gamma^2(\frac{d+\alpha}{4})},
\end{eqnarray}
in the Hardy's inequality
\begin{eqnarray}
\int_{\R}|x|^{-\alp}f^2(x)\,dx\leq C_{d,\alp}\calE[f],\ \forall\,f\in W^{\alp/2,2}(\R),
\label{HI}
\end{eqnarray}
we derive that for every $0\leq c\leq c^*$, the quadratic form  $\calE_c$ defined by
\begin{eqnarray}
D(\calE_c)=W_0^{\alpha/2,2}(\Om),\ \calE_c=\calE_\Om[f]-c\int_\Om\frac{f^2(x)}{|x|^\alpha}\,dx,\ \forall\,f\in W_0^{\alpha/2,2}(\Om),
\end{eqnarray}
is positive.\\
In order to use Doob transform and  provide thereby a generalized ground state energy representation, we are going to prove that operators $(-\Delta)^{\alpha/2}-c^{-1}|x|^{-\alpha}$ have nontrivial harmonic functions. In ther words, for every $c\in(0,c^*]$ there is a function  $w>0\ a.e.$ such that $w\in L^1_{loc},\ |\cdot|^{-\alp}w\in L^1_{loc}$ and
\begin{eqnarray}
(-\Delta)^{\alpha/2}w-c^{-1}|x|^{-\alpha}w=0\ {\rm in\ the\ sense\ of\ distributions}.
\label{harmonic1}
\end{eqnarray}
This can be proved by using Fourier transform (see \cite{frank}). However, for later use (especially in Lemmata \ref{closability},\ref{closability2}) and for the sake of completeness we shall give an alternative proof which is based on potential theoretical tools.\\
In order to rewrite Eq.(\ref{harmonic1}) in a more suitable manner, we set $K_{d,\alp}:=(-\Delta)^{-\alpha/2}$, the Riesz kernel operator given by
\begin{eqnarray}
K_{d,\alp}f:=A(d,\alp)\int |\cdot-y|^{\alp-d}f(y)\,dy,
\end{eqnarray}
where
\begin{eqnarray}
A(d,\alp):=(\frac{2}{\pi})^{d/2}\frac{\Gamma(\frac{d-\alp}{2})}{2^{\alp}
\Gamma(\frac{\alp}{2})}.
\end{eqnarray}
Then Eq.(\ref{harmonic1}) is equiavlent to
\begin{eqnarray}
K_{d,\alp}(|\cdot|^{-\alp}w)(x)=cw(x)-a.e.
\label{harmonic2}
\end{eqnarray}
Clearly the potential candidates for satisfying (\ref{harmonic1})  are radially symmetric functions. Thus we shall look for $w(x)$ of the type $w(x):=|x|^{-\beta}$ where $\beta>0$.
\begin{lem} Let $w(x)=|x|^{-\beta},\ x\neq 0$. Then for every $0<\beta<d-\alp$ we have
\begin{eqnarray}
K_{d,\alp}(|\cdot|^{-\alp}w)(x)=C_{d,\alp,\beta}w(x)-a.e.,
\end{eqnarray}
where
\begin{eqnarray}
C_{d,\alp,\beta}= 2^{\alp}\frac{\Gamma(\frac{\alp+\beta}{2})
\Gamma(\frac{d-\beta}{2})}{\Gamma(\frac{d-(\alp+\beta)}{2}) \Gamma(\frac{\beta}{2})}.
\end{eqnarray}
\label{harmonic}
\end{lem}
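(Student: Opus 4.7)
The plan is to evaluate the integral directly: use scaling to reduce the problem to identifying a single scalar constant, and then compute that constant via the classical Riesz composition formula. First I would verify absolute integrability of $\int |x-y|^{\alpha-d}|y|^{-\alpha-\beta}\,dy$ at its three singular regions: integrability at $y=0$ requires $\alpha+\beta<d$, at $y=x$ requires $\alpha<d$ (automatic), and at infinity requires $\beta>0$. Thus the hypothesis $0<\beta<d-\alpha$ is precisely the range in which the integral is absolutely convergent for almost every $x$.

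Next, the substitution $y=|x|z$ combined with rotational invariance yields
\begin{equation*}
\int |x-y|^{\alpha-d}|y|^{-\alpha-\beta}\,dy \;=\; |x|^{-\beta}\int |e-z|^{\alpha-d}|z|^{-\alpha-\beta}\,dz
\end{equation*}
for any fixed unit vector $e$, so the statement is reduced to computing a single scalar constant. To evaluate it, I would invoke the Riesz composition identity $I_\eta\ast I_\gamma=I_{\eta+\gamma}$ (valid for $\eta+\gamma<d$) with $\eta=\alpha$ and $\gamma=d-\alpha-\beta$: indeed $|y|^{-\alpha-\beta}$ is a constant multiple of the Riesz kernel $I_{d-\alpha-\beta}(y)$, and the convolution produces a constant multiple of $I_{d-\beta}(x)\propto|x|^{-\beta}$. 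Assembling the three normalizing constants of the kernels $I_\alpha$, $I_{d-\alpha-\beta}$, $I_{d-\beta}$ (each of the form $\pi^{d/2}2^{\bullet}\Gamma(\bullet/2)/\Gamma(\bullet/2)$) together with the prefactor $A(d,\alpha)$ from the definition of $K_{d,\alpha}$ yields the advertised product of four Gamma factors. An equivalent route is to pass to the Fourier side: use $\widehat{|\cdot|^{-s}}(\xi)\propto|\xi|^{s-d}$ with its explicit Gamma constant, multiply by the symbol $|\xi|^{-\alpha}$ of $K_{d,\alpha}$, and invert.

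Conceptually the argument is a one-line manipulation once the scaling step is done; the real work is the bookkeeping of Gamma factors. The main obstacle is reconciling the numerator/denominator placement and the power of $2$ so that the constant assembled from $A(d,\alpha)$ and from the convolution of Riesz kernels collapses exactly to the stated $C_{d,\alpha,\beta}$, which requires careful and patient application of the functional equations for $\Gamma$.
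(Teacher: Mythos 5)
Your proposal is correct and follows essentially the same route as the paper: both reduce the claim to the Riesz composition formula $I_\eta\ast I_\gamma=I_{\eta+\gamma}$ (stated in the paper via the constants $A(d,\cdot)$) and then track the normalizing Gamma factors to assemble $C_{d,\alpha,\beta}$. Your preliminary scaling step is sound but becomes redundant once the composition rule is invoked, since that rule already yields both the homogeneity $|x|^{-\beta}$ and the constant in one stroke; the integrability check you add at the outset, while not spelled out in the paper, is a worthwhile inclusion.
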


\begin{proof} A straightforward computation yields
\begin{eqnarray*}
K_{d,\alp}(|\cdot|^{-\alp}w)(x)&=&Cw(x)-a.e.\iff\nonumber\\
w(x)&=&C^{-1}A(d,\alp)\int_{}|x-y|^{\alp-d}
|y|^{-(\alp+\beta)}\,dy\nonumber\\
&=&C^{-1}A(d,\alp)\int_{}|x-y|^{\alp-d}
|y|^{d-(\alp+\beta)-d}\,dy\nonumber\\
&=&C^{-1}A(d,\alp)\frac{A(d,d-(\alp+\beta))}{A(d,d-(\alp+\beta))}
\int_{}|x-y|^{\alp-d}
|y|^{d-(\alp+\beta)_d}\,dy\nonumber\\
&=&C^{-1}\frac{A(d,d-\beta)}{A(d,d-(\alp+\beta))}|x|^{d-\beta-d}
=C^{-1}\frac{A(d,d-\beta)}{A(d,d-(\alp+\beta))}|x|^{-\beta}-a.e..
\end{eqnarray*}
Here we have used the convolution rules for Riesz kernels (see \cite{bliedtner}) as well as the assumption $0<\beta<d-\alp$.\\
Thus
\begin{eqnarray}
C^{-1}&=&C_{d,\alp,\beta}^{-1}=\frac{A(d,d-(\alp+\beta))}{A(d,d-\beta)}\nonumber\\
&=&\frac{\Gamma(\frac{\alp+\beta}{2})}{2^{d-(\alp+\beta)}\Gamma(\frac{d-(\alp+\beta)}{2})}
\frac{2^{(d-\beta)}\Gamma(\frac{d-\beta}{2})}{\Gamma(\frac{\beta}{2})}\nonumber\\
&=&2^{\alp}\frac{\Gamma(\frac{\alp+\beta}{2}) \Gamma(\frac{d-\beta}{2})}{\Gamma(\frac{d-(\alp+\beta)}{2}) \Gamma(\frac{\beta}{2})},
\end{eqnarray}
and the proof is completed.
\end{proof}
Once we have determined the explicit expression of the constant $C_{d,\alp,\beta}$, we are going to determine its range as a function of $\beta$. We shall in fact prove that its range is $(0,c^*]$.\\
Denote by $F$, the function defined as follows:
\begin{eqnarray}
F&=&[0,d-\alp]\to\R_+\nonumber\\
&&\beta\mapsto 2^{\alp}\frac{\Gamma(\frac{\alp+\beta}{2}) \Gamma(\frac{d-\beta}{2})}{ \Gamma(\frac{\beta}{2})\Gamma(\frac{d-(\alp+\beta)}{2})},\ {\rm if}\ 0<\beta<d-\alp\ {\rm and}\
F(0)=F(d-\alp)=0.
\end{eqnarray}
Observing that $F(\beta)=F(d-\alpha-\beta)$ for very $\beta\in[0,d-\alpha]$, it suffices to precise the range of $F$ restricted to the interval $[0,\frac{d-\alpha}{2}]$.\\
Set
\begin{eqnarray}
\beta_c:=\frac{d-\alpha}{2},\ {\rm then}\  c^*=F(\beta_c)=(\frac{d-\alpha}{2})^2.
\end{eqnarray}
\begin{lem} The function $F$ is monotone increasing on the interval $[0,\beta_c]$. It follows that $F([0,\beta_c])=[0,c^*]$ and that for every $0<c\leq c^*$ there is a unique $\beta\in(0,\beta_c]$ such that $w_\beta(x)=|x|^{-\beta},\ x\neq 0$ solves Eq.(\ref{harmonic1})
\label{range}
\end{lem}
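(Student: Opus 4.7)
The plan is to show that $F$ restricts to a continuous, strictly increasing bijection $[0,\beta_c]\to[0,c^{*}]$; once this is done, the intermediate value theorem supplies a unique $\beta\in(0,\beta_c]$ with $F(\beta)=c$ for every $c\in(0,c^{*}]$, and Lemma \ref{harmonic} then guarantees that $w_\beta(x)=|x|^{-\beta}$ solves (\ref{harmonic1}). Continuity on $(0,d-\alpha)$ is clear from properties of $\Gamma$; the boundary value $F(0^{+})=0$ comes from $\Gamma(\beta/2)\to\infty$ while the other three factors stay bounded and positive. At $\beta=\beta_c=(d-\alpha)/2$, both numerator arguments reduce to $(d+\alpha)/4$ and both denominator arguments to $(d-\alpha)/4$, giving $F(\beta_c)=2^{\alpha}\Gamma^{2}(\tfrac{d+\alpha}{4})/\Gamma^{2}(\tfrac{d-\alpha}{4})=c^{*}$, in agreement with the fractional Hardy constant recorded earlier.

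The crux is strict monotonicity. Writing $\psi:=\Gamma'/\Gamma$, logarithmic differentiation yields
\[ \frac{F'(\beta)}{F(\beta)}=\tfrac12\bigl[\psi(\tfrac{\alpha+\beta}{2})-\psi(\tfrac{\beta}{2})\bigr]-\tfrac12\bigl[\psi(\tfrac{d-\beta}{2})-\psi(\tfrac{d-\alpha-\beta}{2})\bigr]. \]
The key observation is that both bracketed differences involve the same increment $\alpha/2$. Setting $g(x):=\psi(x+\alpha/2)-\psi(x)=\int_{0}^{\alpha/2}\psi'(x+t)\,dt$, the expression rewrites as
\[ \frac{F'(\beta)}{F(\beta)}=\tfrac12\bigl(g(\tfrac{\beta}{2})-g(\tfrac{d-\alpha-\beta}{2})\bigr). \]
Since $\psi'(t)=\sum_{n\geq 0}(n+t)^{-2}$ is strictly decreasing on $(0,\infty)$, $g$ is itself strictly decreasing; hence $F'(\beta)>0$ precisely when $\beta/2<(d-\alpha-\beta)/2$, i.e., when $\beta<\beta_c$. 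This proves $F$ is strictly increasing on $[0,\beta_c]$.

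The main obstacle is essentially bookkeeping of the logarithmic derivative; once the common shift $\alpha/2$ is spotted, the proof closes using only the monotonicity of $\psi'$. An equivalent route would be to observe that $\log F$ is strictly concave on $(0,d-\alpha)$ (since $\psi''<0$ implies the second derivative of $\log F$ is negative) and then combine this with the manifest symmetry $F(\beta)=F(d-\alpha-\beta)$ to deduce monotonicity on each half of the interval.
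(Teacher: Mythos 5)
Your proof is correct and follows essentially the same route as the paper: logarithmic differentiation of $F$ reduces the question to the strict monotonicity of the digamma increment $\psi(\cdot+\alpha/2)-\psi(\cdot)$, which is the same reduction the paper makes via its function $\Psi$. The paper establishes that monotonicity from the integral representation of the digamma function, while you invoke the trigamma series $\psi'(t)=\sum_{n\geq0}(n+t)^{-2}$; these are equivalent, and your version of determining the sign of $F'$ directly on $(0,\beta_c)$ (rather than first locating its unique zero and then appealing to the boundary values) is a slightly more streamlined packaging of the same argument.
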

\begin{proof}
We are going to prove that the derivative of $F$ vanishes in $(0,d-\alp)$ at the only point $\beta_c$. Since $F\geq 0,\ F(0)=F(d-\alp)=0$, we derive that $F$ achieves its maximum at $\beta_c$, which together with Lemma\ref{harmonic} would complete the proof.\\
Let $\beta\in(0,d-\alp)$. A direct computation yields
\begin{eqnarray}
F'(\beta)&=0&\iff\Gamma(\frac{\beta}{2})\Gamma(\frac{d-\beta}{2})
\big[\Gamma'(\frac{\alp+\beta}{2})\Gamma(\frac{d-(\alp+\beta)}{2})
+\Gamma(\frac{\alp+\beta}{2})\Gamma'(\frac{d-(\alp+\beta)}{2})\big]\nonumber\\
&&-\Gamma(\frac{\alp+\beta}{2})\Gamma(\frac{d-(\alp+\beta)}{2})
\big[\Gamma(\frac{\beta}{2})\Gamma'(\frac{d-\beta}{2})
+\Gamma'(\frac{\beta}{2})\Gamma(\frac{d-\beta}{2})\big]=0.
\end{eqnarray}
Dividing by
$$
\Gamma(\frac{\beta}{2})\Gamma(\frac{\alp+\beta}{2})\Gamma(\frac{d-\beta}{2})
\Gamma(\frac{d-(\alp+\beta)}{2}),
$$
leads to
\begin{eqnarray}
F'(\beta)=0\iff\big[\frac{\Gamma'(\frac{\alp+\beta}{2})}{\Gamma(\frac{\alp+\beta}{2})}
+\frac{\Gamma'(\frac{d-(\alp+\beta)}{2})}{\Gamma(\frac{d-(\alp+\beta)}{2})}\big]
-\big[\frac{\Gamma'(\frac{d-\beta}{2})}{\Gamma(\frac{d-\beta}{2})}+
\frac{\Gamma'(\frac{\beta}{2})} {\Gamma(\frac{\beta}{2})}\big]=0.
\label{cond1}
\end{eqnarray}
Set
$$
\Phi(z):=\frac{\Gamma'(z)}{\Gamma(z)}\ {\rm and}\ \Psi(\beta):=\Phi(\frac{\alp+\beta}{2})-\Phi(\frac{\beta}{2}).
$$
Then equation (\ref{cond1}) is equivalent to
\begin{eqnarray*}
[\Phi(\frac{\alp+\beta}{2})-\Phi(\frac{\beta}{2})]
-[\Phi(\frac{d-\beta}{2})-\Phi(\frac{d-(\alp+\beta)}{2})]=\Psi(\beta)-\Psi(d-(\alp+\beta))
=0.
\end{eqnarray*}
Now recalling the known formula for the digamma-function $\Gamma'/\Gamma$:
\begin{eqnarray}
\Phi(\beta)=-\gamma+\int_0^1\frac{1-t^{\beta-1}}{1-t}\,dt=-\gamma
+\int_0^{\infty}\frac{e^{-t}-e^{-t\beta}}{1-e^{-t}}\,dt,
\end{eqnarray}
we obtain
\begin{eqnarray}
\Psi(\beta)&&=\int_0^{\infty}\frac{e^{-\frac{t\beta}{2}}-e^{-\frac{t(\beta+\alp)}{2}}}
{1-e^{-t}}\,dt\nonumber\\
&&=\int_0^{\infty}e^{-\frac{t\beta}{2}}\frac{1- e^{-\frac{t\alp}{2}}}{1-e^{-t}}\,dt,
\end{eqnarray}
and $\Psi'(\beta)=-\frac{\beta}{2}\Psi(\beta)<0\, \forall\,\beta\in(0,d-\alp)$. Thus for $\beta\in(0,d-\alp)$, $F'(\beta)=0$ if and only if $\beta=d-(\alp+\beta)$, yielding, $\beta=\beta_c$ which finishes the proof.
\end{proof}
\begin{rk}{\rm By the way we note that Lemma \ref{harmonic}, Lemma \ref{range} together with \cite[Theorem 1.9]{fitz00} or \cite[Theorem 3.1]{benamor-forum2012} yield a proof for Hardy's inequality (\ref{HI}) with constant larger than $\frac{1}{c^*}$.
}
\end{rk}
From now on for every $\beta\in(0,\beta_c)$, we set
\begin{eqnarray}
B_\beta:=F(\beta),\ V_\beta:=\frac{\bb}{|x|^\alpha}\ {\rm and}\ V_{c^*}:=\frac{c^*}{|x|^\alpha}.
\end{eqnarray}

\section{The subcritical case}

We fix $\beta\in(0,\beta_c)$, neglect the dependence on $\beta$ and set $V:=V_\beta$. It follows from the above considerations together with Hardy's inequality (\ref{HI}) that
\begin{eqnarray}
\int f^2(x)V(x)\,dx\leq\frac{\bb}{c*}\calE_\Om[f],\ \forall\,f\in W_0^{\alpha/2,2}(\Om).
\end{eqnarray}
Having in mind that $0<\frac{\bb}{c*}<1$, we conclude that the quadratic form which we denote by $\calE_V$ and which is defined by
\begin{eqnarray}
D(\calE_V)=W_0^{\alpha/2,2}(\Om),\ \calE_V[f]=\calE_\Om[f]-\int f^2(x)V\,dx,\ \forall\,f\in W_0^{\alpha/2,2}(\Om),
\end{eqnarray}
is closed in $L^2(\Om,dx)$ and is even comparable to $\calE_\Om$. Hence setting $L_V$ the positive selfadjoint operator associated to $\calE_V$, we conclude that $L_V$ has purely discrete spectrum $0<\lam_0^V<\lam_1^V<\cdots<\lam_k^V\to\infty$, as well.\\
Furthermore the associated semigroup $e^{-tL_V},\ t>0$ is irreducible (it has a kernel which dominates the heat kernel of the free operator $L_0$). Thereby there is a unique $\varp_0^V\in\Wo$ such that
\begin{eqnarray}
\|\varp_0^V\|_{L^2}=1,\ \varphi_0^V> 0\ q.e.\ {\rm and}\ L_V\varp_0^V=\lam_0^V\varp_0^V.
\end{eqnarray}
%
%
%
%
%
%
In the goal of obtaining the precise behavior of the ground state, we proceed to transform the form $\calE_V$ into  a Dirichlet form on $L^2(\Om,w^2dx)$, where $w(x)=|x|^{-\beta}$.\\
Let $Q$ the $w$-transform of $\calE_V$, i.e.,  the quadratic form defined in $L^2(\Om,w^2dx)$ by
\begin{eqnarray}
D(Q):=\{f\colon\,wf\in\Wo\}\subset L^2(\Om,w^2dx),\ Q[f]=\calE_V[wf],\ \forall\,f\in\,D(Q).
\end{eqnarray}
\begin{lem} The form $Q$ is a regular Dirichlet form and
\begin{eqnarray}
Q[f]=\cf\il\il \frac{(f(x)-f(y))^2}{|x-y|^{d+\alpha}} w(x)w(y)\,dxdy,\ \forall\,f\in D(Q).
\label{ID}
\end{eqnarray}
\label{closability}
\end{lem}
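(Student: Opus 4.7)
\medskip

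\noindent\textbf{Proof plan.} The statement has two parts: the explicit ground-state representation identity (\ref{ID}), and the claim that $Q$ is a regular Dirichlet form. My strategy is to first derive the identity on a dense class of test functions by a direct algebraic computation, then extend it by density, and finally read off the Dirichlet-form properties from the identity together with the unitary equivalence $U\colon L^2(\Om,w^2\,dx)\to L^2(\Om,dx)$, $f\mapsto wf$, which intertwines $(Q,D(Q))$ with $(\calE_V,\Wo)$.

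The algebraic heart of (\ref{ID}) is the pointwise symmetrization
$$(w(x)f(x)-w(y)f(y))^2 = w(x)w(y)(f(x)-f(y))^2 + (w(x)-w(y))\bigl(w(x)f(x)^2-w(y)f(y)^2\bigr),$$
valid for arbitrary real $f,w$. Multiplying by $\cf\,|x-y|^{-d-\alpha}$ and integrating in $(x,y)$, the first term on the right yields the right-hand side of (\ref{ID}); after the obvious $x\leftrightarrow y$ symmetrization the second term equals
$$\calA(d,\alpha)\int w(x)f(x)^2\Bigl(\mathrm{p.v.}\!\int\frac{w(x)-w(y)}{|x-y|^{d+\alpha}}\,dy\Bigr)dx,$$
and by Lemma \ref{harmonic} the inner principal-value integral is exactly $(-\Delta)^{\alpha/2}w(x) = \bb|x|^{-\alpha}w(x) = V(x)w(x)$ at each $x\neq 0$. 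Subtracting gives $\calE_V[wf]=$ (right-hand side of (\ref{ID})). To justify these manipulations I would first impose $wf\in C_c^\infty(\Om\setminus\{0\})$, where everything converges absolutely and Lemma \ref{harmonic} applies pointwise, and then pass to the full domain by density: since $\alpha<d$ the singleton $\{0\}$ has $\calE$-capacity zero, so $C_c^\infty(\Om\setminus\{0\})$ is dense in $\Wo=D(\calE_V)$, and both sides of (\ref{ID}) are continuous in the $\calE_V$-norm, the right-hand side being a nonnegative quadratic form dominated by $\calE_\Om$ via Hardy's inequality.

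Once (\ref{ID}) is established, closedness of $Q$ is immediate via the unitary equivalence with the closed form $\calE_V$. The Markov property is visible on the right-hand side of (\ref{ID}): every normal contraction $T$ satisfies $|Tf(x)-Tf(y)|\leq|f(x)-f(y)|$ while $w(x)w(y)\geq 0$, hence $Q[Tf]\leq Q[f]$. For regularity I would use as core the family $\{g/w : g\in C_c^\infty(\Om\setminus\{0\})\}\subset D(Q)\cap C_c(\overline\Om)$, whose $Q_1$-density in $D(Q)$ is inherited from the $\calE_V$-density of $C_c^\infty(\Om\setminus\{0\})$ in $\Wo$ via $U^{-1}$, and whose uniform density in $C_c(\overline\Om)$ follows by a Stone--Weierstrass argument (the point $0$ being $w^2\,dx$-null). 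I expect the principal-value step above, specifically the interchange of the $x$- and $y$-integrations and the pointwise identification with $(-\Delta)^{\alpha/2}w$, to be the main technical obstacle; restricting first to test functions supported away from the origin, where Lemma \ref{harmonic} applies classically, is the device that isolates the difficulty so it can be dispatched afterwards by capacitary approximation.
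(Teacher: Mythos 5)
The algebraic core of your argument---the symmetrization identity feeding the pointwise equation $(-\Delta)^{\alpha/2}w=Vw$ from Lemma~\ref{harmonic} into the cross term---is the same computation the paper carries out, and restricting first to $wf\in C_c^\infty(\Om\setminus\{0\})$ is a legitimate device for the identity (\ref{ID}) itself. The genuine gap is in the regularity step. Your proposed core
$\{g/w:\ g\in C_c^\infty(\Om\setminus\{0\})\}$
consists of functions that vanish identically in a neighbourhood of the origin (and indeed $g/w=g\,|\cdot|^\beta$ vanishes at $0$ even for $g\in C_c^\infty(\Om)$). Such a family cannot be dense in $C_c(\Om)$ for the \emph{uniform} norm: if $h\in C_c(\Om)$ has $h(0)\neq 0$, then $\|h-g/w\|_\infty\geq |h(0)|$ for every $g$ in your class. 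That $\{0\}$ is $w^2\,dx$-null is irrelevant---regularity in the sense of Fukushima--Oshima--Takeda requires sup-norm density in $C_c(\Om)$, not density modulo a null set. The Stone--Weierstrass appeal does not rescue this, since the algebra you generate still vanishes at $0$.

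The paper avoids this by proving $C_c^\infty(\Om)\subset D(Q)$ directly: using the representation (\ref{ID}), membership of smooth compactly supported functions in $D(Q)$ reduces (\cite[Example~1.2.1]{fuku-oshima}) to the local integrability condition
\begin{eqnarray}
\int_\Om\int_\Om\frac{|x-y|^{2}}{|x-y|^{d+\alpha}}\,w(x)w(y)\,dx\,dy<\infty,
\nonumber
\end{eqnarray}
which is checked by a Riesz-potential estimate using Lemma~\ref{harmonic} once more with the order $r=2-\alpha$. With $C_c^\infty(\Om)\subset D(Q)\cap C_c(\Om)$ in hand, sup-norm density in $C_c(\Om)$ is automatic and the Beurling--Deny--LeJan theorem yields regularity. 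If you want to keep your approach, you would need to enlarge your core to include elements of $D(Q)$ that do not vanish at $0$, which in practice amounts to re-proving the integrability estimate above; at that point you are reproducing the paper's argument.

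A smaller wrinkle: when extending (\ref{ID}) from the dense subclass to all of $D(Q)$ you assert that both sides are continuous in the $\calE_V$-norm. The right-hand side is a nonnegative quadratic integral and is only lower semicontinuous (Fatou) without an \emph{a priori} domination by $\calE_V$; establishing such a domination is essentially what the Fubini/dominated-convergence step in the paper accomplishes. That step deserves an explicit argument rather than a one-line appeal to density.
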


\begin{proof} Obviously $Q$ is closed and densely defined as it is unitary equivalent to the closed densely defined form $\calE_V$. Let us prove (\ref{ID}).\\
Writing
\begin{eqnarray}
w(x)w(y)\big(\frac{g(x)}{w(x)}-\frac{g(y)}{w(y)}\big)^2&=&(g(x)-g(y))^2+g^2(x)\frac{(w(y)-w(x))}{w(x)}\nonumber\\
&&+g^2(y) \frac{w(x)-w(y)}{w(y)},
\end{eqnarray}
and setting $g=wf$,  we get
\begin{eqnarray}
Q[f]&=&\cf\il_{}\il_{}\frac{(f(x)-f(y))^2}{|x-y|^{d+\alpha}}w(x)w(y)\,dx\,dy\nonumber\\
&+&\calA{(d,\alpha)}\il_{}\il \frac{w(x)-w(y)}{|x-y|^{d+\alpha}}{f^2(x)}{w(x)}\,dx\,dy\nonumber\\
 &-&\il{}f^2(x)w^2(x)V(x)\,dx\ \forall\,f\in {\cal{C}},
\end{eqnarray}
Having Hardy's inequality in mind and observing that
\begin{eqnarray}
Q[f]\geq\calA{(d,\alpha)}\il_{}\il \frac{w(x)-w(y)}{|x-y|^{d+\alpha}}{f^2(x)}{w(x)}\,dx\,dy\nonumber\\
 -\il{}f^2(x)w^2(x)V(x)\,dx\ \forall\,f\in D(Q),
\end{eqnarray}
we derive in particular that the integral
\begin{eqnarray}
\il_{}\il \frac{w(x)-w(y)}{|x-y|^{d+\alpha}}{f^2(x)}{w(x)}\,dx\,dy\ \ {\rm is\ finite}.
\end{eqnarray}
Thus using Fubini's together with dominated convergence theorem, we achieve
\begin{eqnarray}
Q[f]&=&\cf\il_{}\il_{}\frac{(f(x)-f(y))^2}{|x-y|^{d+\alpha}}w(x)w(y)\,dx\,dy\nonumber\\
&+& \calA{(d,\alpha)}\il_{}{f^2(x)}{w(x)}\big(\lim_{\epsilon\rightarrow0}
\il_{\{|x-y|>\epsilon\}}\frac{w(x)-w(y)}{|x-y|^{d+\alpha}}\,dy\big)\,dx\nonumber\\
 \nonumber\\
 &-&\il{}f^2(x)w^2(x)V(x)\,dx\ \forall\,f\in D(Q).
\label{AVD1}
\end{eqnarray}
Now, owing to  the fact that $w$ is a solution of the equation
\begin{eqnarray}
 (-\Delta)^{\frac{\alpha}{2}}w-Vw=0,
\end{eqnarray}
having in Eq.(\ref{Lapf}) in hands and  substituting in  (\ref{AVD1}) we get formula (\ref{ID}) from which we read that $Q$ is Markovian and and hence a Dirichlet form.\\
{\em Regularity}:  Relying on the  expression (\ref{ID}) of $Q$, we learn from \cite[Example 1.2.1.]{fuku-oshima}, that $C_c^\infty(\Om)\subset D(Q)$ if and only if
\begin{eqnarray}
J:=\int_\Om\int_\Om\frac{|x-y|^2}{|x-y|^{d+\alpha}}w(x)w(y)\,dx\,dy<\infty.
\end{eqnarray}
For $d=1$ the latter assumption is obviously verified.\\
Assume that $d\geq 2$. Set $r=2-\alpha$. Then $0<r<d$.\\
We rewrite $J$  as
\begin{eqnarray}
J&=&\int_\Om w(x)\big(\int_\Om\frac{|y|^{-r}|y|^{-\frac{d-r}{2}}}{|x-y|^{d-r}}
|y|^{\frac{d-\alp}{2}-\beta}|y|\,dy\big)\,dx\nonumber\\
&&\leq|\Om|^{1+\frac{d-\alp}{2}-\beta}\frac{1}{A(d,\alp)}\int_\Om w(x)K_{d,r}(|\cdot|^{-r}|\cdot|^{-\frac{d-r}{2}})\,dy\nonumber\\
&&\leq
\kappa\int_\Om w(x)|x|^{-\frac{d-r}{2}}<\infty,
\end{eqnarray}
where $\kappa$ is a finite constant and $J$ is finite. Here from the first to the second inequality we used Lemma \ref{harmonic}.\\
Hence from the Beurling--Deny--LeJan formula (see \cite[Theorem 3.2.1, p.108]{fuku-oshima}) together with the identity (\ref{ID}), we learn that $Q$ is regular, which completes the proof.
\end{proof}
\begin{rk}{\rm At this stage we quote that a generalized ground state representation for fractional Laplacian on the whole space was proved in \cite{frank}, using a different method from ours.
}
\end{rk}

We designate by $L^w$ the operator associated to $Q$ in the weighted Lebesgue space $L^2(\Om,w^2dx)$ and $T_t^w,\ t>0$ its semigroup. Then
\begin{eqnarray}
 L^w=w^{-1}L_Vw\ {\rm and}\ T_t^w=w^{-1}e^{-tL_V}w,\ t>0.
 \end{eqnarray}
\begin{theo} Set $r=\frac{2d}{d-\alpha}$,
\begin{eqnarray}
A:=\frac{1}{C(\Om,d,\alp)^{1/r}}(1-\frac{B_\beta}{c^*})^{1/r}
(\frac{1}{\lam_0^V}\sup_{x\in\Om}|x|^{2\beta})^{1-1/r}
~~\hbox{\rm and}~~q=2-1/r.
\end{eqnarray}
Then
\begin{eqnarray}
\parallel f^2\parallel_{ {L^q}(w^2dx)}\leq AQ[f],\ \forall\,f\in D(Q).
\label{w-sob}
\end{eqnarray}
It follows that for every $t>0$, the operator $T_t^w$ is ultracontractive.
\label{UC}
\end{theo}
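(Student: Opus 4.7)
The plan is to derive the weighted Sobolev-type estimate (\ref{w-sob}) by combining two linear bounds on $Q[f]$ via Hölder's inequality, and then to pass from (\ref{w-sob}) to ultracontractivity by a standard semigroup argument.

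First, for any $f \in D(Q)$, substituting $h := wf \in \Wo$ into the existing ingredients produces two bounds. The Sobolev embedding (\ref{sobo-free}) together with the Hardy comparison $\calE_V[h] \geq (1-B_\beta/c^*)\,\calE_\Om[h]$ yields
$$\|wf\|_{L^r(dx)}^2 \;\leq\; \frac{C(\Om,d,\alp)}{1-B_\beta/c^*}\,Q[f],$$
while the ground-state principle $\calE_V[h] \geq \lam_0^V\,\|h\|_{L^2}^2$ for $L_V$ (inherited by $L^w$, since the two operators are unitarily equivalent through $u\mapsto wu$) yields
$$\|f\|_{L^2(w^2dx)}^2 \;=\; \|wf\|_{L^2(dx)}^2 \;\leq\; \frac{1}{\lam_0^V}\,Q[f].$$

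Next I would rewrite the target integral in the $h$-variable. Since $2-2q = -2(1-1/r) < 0$ and $w(x)=|x|^{-\beta}$, the factor $w^{2-2q}(x) = |x|^{2\beta(1-1/r)}$ is uniformly bounded on the bounded set $\Om$ by $(\sup_\Om|x|^{2\beta})^{1-1/r}$, whence
$$\int_\Om f^{2q} w^2\,dx \;=\; \int_\Om h^{2q}\,w^{2-2q}\,dx \;\leq\; (\sup_\Om|x|^{2\beta})^{1-1/r}\int_\Om h^{2q}\,dx.$$
This accounts for the factor $(\sup_\Om|x|^{2\beta})^{1-1/r}$ appearing in $A$. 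The remaining integral $\int h^{2q}\,dx$ is interpolated against the two bounds above by a Hölder split that allocates weight $1/r$ to the Sobolev piece and $1-1/r$ to the ground-state piece, so that the combined $Q[f]$-exponent becomes $1$ after the $1/q$-th root is taken; this contributes the remaining factors $\bigl((1-B_\beta/c^*)/C\bigr)^{1/r}$ and $(1/\lam_0^V)^{1-1/r}$ of $A$, completing the proof of (\ref{w-sob}).

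Ultracontractivity of $T_t^w$ on $L^2(\Om, w^2dx)$ then follows by classical reasoning: one application of Hölder in $L^p(w^2dx)$ converts (\ref{w-sob}) into a Nash-type inequality $\|f\|_{L^2(w^2dx)}^{2+\sigma} \leq A'\,Q[f]\,\|f\|_{L^1(w^2dx)}^{\sigma}$ with $\sigma = 2(q-1)/q$; integrating the resulting differential inequality for $t \mapsto \|T_t^w f\|_{L^2(w^2dx)}^2$ yields an $L^1(w^2dx) \to L^2(w^2dx)$ ultracontractive bound, and the semigroup property together with the self-adjointness of $T_t^w$ on $L^2(\Om, w^2dx)$ upgrades this to the $L^1(w^2dx) \to L^\infty(w^2dx)$ bound for every $t > 0$. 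The technically delicate step is the Hölder decomposition above: matching precisely the exponents $1/r$ and $1-1/r$ in $A$ needs careful bookkeeping, and in the regime $2q > r$ (which occurs for $\alp < d(\sqrt{2}-1)$) a direct two-factor interpolation between $L^r(dx)$ and $L^2(dx)$ of $h$ is inadmissible; there I would reroute the interpolation through the weighted scale $L^p(w^2dx)$, replacing the raw Sobolev bound by the equivalent weighted estimate $\|f\|_{L^r(w^2dx)}^2 \leq (\sup_\Om|x|^{2\beta})^{(r-2)/r}\bigl(C(\Om,d,\alp)/(1-B_\beta/c^*)\bigr)Q[f]$ (obtained by absorbing the mismatch $w^{r-2}$ into the boundedness of $\Om$) and interpolating in that weighted scale between $L^2(w^2dx)$ and $L^r(w^2dx)$ to reach $L^{2q}(w^2dx)$.
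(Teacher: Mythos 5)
Your outline follows the paper's route: the paper's proof is exactly the two linear bounds on $Q[f]$ (the Sobolev one after the Hardy comparison and the spectral gap one, the latter being the paper's display (3.16)) spliced together by H\"older, with the weight mismatch absorbed into $\sup_\Om|x|^{2\beta}$ thanks to the boundedness of $\Om$. So methodologically you and the paper agree, and the three ingredients you isolate are the right ones.

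Where you add genuine value is the observation about $2q>r$, and I want to push it further than you did. The obstruction is not a technicality you can route around by book-keeping: a Sobolev inequality $\|f\|_{L^{2q}(w^2dx)}^2\le A\,Q[f]$ on $D(Q)$ with $2q>r$ would be \emph{supercritical} (fail on localized bumps supported away from the origin, where $w$ is harmless and $Q$ is comparable to $\calE_\Om$ of $wf$), so it is simply false in that regime. Your proposed repair --- passing to the weighted scale $L^p(w^2dx)$ and interpolating between $L^2(w^2dx)$ and $L^r(w^2dx)$ --- cannot help, because convexity of $L^p$-norms over the fixed measure $w^2dx$ still only reaches $p\in[2,r]$, never $p=2q>r$. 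In fact the printed proof in the paper has the same defect: its display (3.14) is the correct H\"older split for $q=2-1/r$, but then (3.15) claims $\bigl(\int w^{2r}f^{2r}\bigr)^{1/r}=\|wf\|_{L^{2r}}^2$ is controlled by the Sobolev embedding, while (\ref{sobo-free}) only controls $\|wf\|_{L^r}$, not $\|wf\|_{L^{2r}}$. So neither your H\"older nor the paper's actually closes with $q=2-1/r$.

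The clean way out, which neither you nor the paper state, is to lower the exponent. If you take $q=2-2/r$ (noting $4-4/r\le r$ for all $r\ge 2$, with equality only at $r=2$) your two-factor H\"older/interpolation goes through without any case distinction; the exponents $2/r$ and $1-2/r$ then appear in $A$. Even simpler, and entirely sufficient for the ultracontractivity conclusion: use your own intermediate display
\begin{equation*}
\|f\|_{L^r(w^2dx)}^2\;\le\;\bigl(\sup_{\Om}|x|^{2\beta}\bigr)^{\frac{r-2}{r}}\,
\frac{C(\Om,d,\alp)}{1-B_\beta/c^*}\;Q[f],\qquad f\in D(Q),
\end{equation*}
as the final Sobolev embedding of $D(Q)$ into $L^r(\Om,w^2dx)$, which follows directly from (\ref{sobo-free}), the Hardy comparison, and $\int(wf)^r\,dx\ge(\sup_\Om|x|^\beta)^{-(r-2)}\int f^r w^2\,dx$ on the bounded set $\Om$; the spectral-gap ingredient can then be used (if needed by the cited ultracontractivity theorem) only to absorb an $L^2(w^2dx)$ term. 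So: same approach as the paper, a sharp observation about an actual defect in both arguments, but the repair you sketch misfires --- drop $q$ below $r/2$ (or drop the interpolation altogether) rather than shuffling the weight.
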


\begin{proof} By H\"older's inequality, we get for every $f\in D(Q)$,
\begin{eqnarray}\label{l311}
  \int_{}w^2f^{2(2-1/r)}&\leq& \big(\int w^{2r}f^{2r}\big)^{1/r}\big(\int f^2\big)^{1-1/r}
\end{eqnarray}
Having Sobolev inequality (\ref{sobo-free}) in hands,  we obtain
\begin{eqnarray}\label{l321}
 \big(\int w^{2r}f^{2r}\big)^{1/r}\leq\frac{1}{C(\Om,d,\alp)}(1-\frac{B_\beta}{c^*})Q[f] .
\end{eqnarray}
On the other hand we have,
\begin{eqnarray}
\int g^2|x|^{2\beta}\,dx&\leq&\sup_{x\in\Om}|x|^{2\beta}\int g^2\,dx\leq \frac{1}{\lam_0^V}\sup_{x\in\Om}|x|^{2\beta}{\calE_V}[g],\,\forall\,g\in\Wo.
\end{eqnarray}
Setting $g=wf$, we get
\begin{equation}\label{1333}
\int f^2\leq \frac{1}{\lam_0^V}\sup_{x\in\Om}|x|^{2\beta}Q[f],\ \forall\,f\in \cal{C}
\end{equation}
Combining (\ref{l311}), (\ref{l321}) and (\ref{1333}), we get (\ref{w-sob}).\\
Finally, since $Q$ is a Dirichlet form, it is know that a Sobolev embedding for the domain of a Dirichlet form yields the ultracontrativity of the related semigroup ( see \cite[Theorems 4.1.2,4.1.3]{saloff-coste}), which ends the proof.
\end{proof}
Theorem \ref{UC} leads to a rough upper bound for $\phiv$, giving thereby partial information  about the singularities of $\phiv$.
\begin{coro} The following upper bound holds true
\begin{eqnarray}
\phiv\leq A(\frac{t}{2})^{-\frac{r}{r-2}}e^{t\lamv}w,\ a.e.,\ \forall\,t>0.
\end{eqnarray}
\label{ub}
\end{coro}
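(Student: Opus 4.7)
The strategy is to turn $\phiv$ into an eigenfunction of the Doob-transformed semigroup $T_t^w$ and then apply the ultracontractivity of $T_t^w$ afforded by Theorem~\ref{UC}.

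\emph{Step 1 (Doob transform to an eigenfunction).} Since $L_V\phiv=\lamv\phiv$, one has $e^{-tL_V}\phiv=e^{-t\lamv}\phiv$ for every $t>0$. Using the intertwining $T_t^w=w^{-1}e^{-tL_V}w$ recalled at the end of Section~3, I set $\tilde\phi:=\phiv/w$ and obtain $T_t^w\tilde\phi=e^{-t\lamv}\tilde\phi$, together with the unit-norm normalisation
\[
\|\tilde\phi\|_{L^2(\Om, w^2\,dx)}^2 = \int_\Om \frac{\phiv^{2}}{w^{2}}\,w^{2}\,dx = \|\phiv\|_{L^2(\Om)}^{2}=1.
\]

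\emph{Step 2 (quantitative ultracontractivity of $T_t^w$).} Feeding the Sobolev-type embedding (\ref{w-sob}) into the Varopoulos--Nash--Moser heat-kernel machinery (the cited Saloff-Coste, Theorems~4.1.2--4.1.3) yields the operator-norm bound
\[
\|T_t^w\|_{L^2(\Om,w^{2}\,dx)\to L^\infty}\;\leq\;A\,(t/2)^{-r/(r-2)},\qquad t>0,
\]
where the exponent $r/(r-2)$ is exactly what the scaling step of the iteration produces from the Sobolev exponent $q=2-1/r$ appearing in (\ref{w-sob}), and where the constant $A$ is the same one defined in Theorem~\ref{UC} (the split of $A$ as a Sobolev factor of weight $1/r$ times a spectral factor of weight $1-1/r$ is precisely what is needed for one application of the iteration to leave no superfluous constant).

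\emph{Step 3 (combine).} From the eigen-equation of Step~1 together with Step~2,
\[
\|\tilde\phi\|_\infty = e^{t\lamv}\|T_t^w\tilde\phi\|_\infty \;\leq\; e^{t\lamv}\,\|T_t^w\|_{L^2(w^2\,dx)\to L^\infty}\,\|\tilde\phi\|_{L^2(w^2\,dx)} \;\leq\; A\,(t/2)^{-r/(r-2)}\,e^{t\lamv}.
\]
Multiplying through by $w$ gives the claimed almost-everywhere inequality $\phiv \le A(t/2)^{-r/(r-2)} e^{t\lamv} w$.

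The only non-routine point is Step~2: extracting the ultracontractivity bound with exactly the constant $A$ of Theorem~\ref{UC} and the exponent $r/(r-2)$ from a single application of the Nash/Moser iteration applied to the weighted Dirichlet form $Q$. Steps~1 and~3 are then essentially formal once the intertwining and the normalisation have been observed.
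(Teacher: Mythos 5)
Your proof is correct, and it is exactly the ``quite standard'' argument the paper itself declines to write out: Doob-transform $\phiv$ to the $L^2(w^2\,dx)$-normalised eigenfunction $\phiv/w$ of $T_t^w$, invoke the $L^2\to L^\infty$ bound supplied by the Sobolev inequality~(\ref{w-sob}) of Theorem~\ref{UC}, and multiply back by $w$. The one place you over-sell is Step~2: obtaining $\|T_t^w\|_{L^2(w^2\,dx)\to L^\infty}\leq A(t/2)^{-r/(r-2)}$ with \emph{exactly} the constant $A$ of Theorem~\ref{UC} is not what a ``single application of the iteration'' delivers---the Nash/Moser or Coulhon-type derivation from a Sobolev embedding always introduces its own dimensional factor, and a careful match of the exponent $r/(r-2)$ against $q=2-1/r$ in~(\ref{w-sob}) requires bookkeeping the paper does not display either---but the paper is equally informal about that constant, so this does not affect the soundness of the method.
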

\begin{proof} The proof is quite standard so we omit it.
\end{proof}
As long as the distance function is involved (decay at the boundary) in any accurate description of $\phiv$, one needs among others a representation of $\phiv$ involving the Green kernel  $G$ or  $G^V$. This is established in:
\begin{lem} The following identity holds true
\begin{eqnarray}
\phiv&=&K(V\phiv)+\lamv K\phiv\ a.e.
\end{eqnarray}
\label{ground-rep}
\end{lem}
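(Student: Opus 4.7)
The plan is to extract the integral identity from the variational form of the eigenvalue equation $L_V\phiv=\lamv\phiv$, which reads
$$\calE_\Om(\phiv,\psi)-\int_\Om V\phiv\psi\,dx=\lamv\int_\Om\phiv\psi\,dx,\quad\forall\,\psi\in\Wo.$$
Since $K\colon L^2(\Om,dx)\to\Wo$ inverts $L_0$ with the defining property $\calE_\Om(u,K\eta)=\int u\eta\,dx$ for $u\in\Wo$, the natural strategy is to specialize $\psi=K\eta$ with $\eta\in L^2(\Om)$ arbitrary. This substitution, together with the symmetry of the Green kernel $G$ and Fubini, converts the equation into
$$\int_\Om\phiv\,\eta\,dx=\int_\Om K(V\phiv)\,\eta\,dx+\lamv\int_\Om K\phiv\,\eta\,dx,$$
and since $\eta\in L^2(\Om)$ is arbitrary, the pointwise identity follows a.e.

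The main obstacle is the integrability bookkeeping needed to apply Fubini to the mixed term $\int\!\!\int G(x,y)V(x)\phiv(x)\eta(y)\,dx\,dy$. Here I would invoke Corollary \ref{ub} evaluated at, say, $t=1$, which yields the pointwise bound $\phiv\leq C w$ a.e. with $w(x)=|x|^{-\beta}$. Consequently
$$V\phiv\leq C B_\beta\,|x|^{-\alp-\beta},$$
and because $\beta<\beta_c=(d-\alp)/2$ gives $\alp+\beta<(d+\alp)/2<d$, while $\Om$ is bounded, $V\phiv\in L^1(\Om)$. Dominating the Green kernel by the Riesz kernel $G(x,y)\leq A(d,\alp)|x-y|^{\alp-d}$ and applying Lemma \ref{harmonic} further shows $K(V\phiv)(x)\leq C'|x|^{-\beta}$ a.e., which is locally $L^2$ and more than enough to justify Fubini against any $\eta\in L^2_c(\Om)$; the general $\eta\in L^2(\Om)$ is then reached by a density argument.

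I expect the algebraic core of the proof to be routine once these integrability bounds are in place, as it is the classical manoeuvre of inverting the eigenvalue equation through the Green operator. The only delicate ingredient is the a priori bound $\phiv\lesssim w$, without which the singular weight $V$ could destroy the integrability of $V\phiv$; this is precisely what Corollary \ref{ub} (itself a consequence of the ultracontractivity proved in Theorem \ref{UC}) provides, which explains the ordering of the statements in this section.
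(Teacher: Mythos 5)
Your argument is correct and reaches the same identity, but it takes a route that is technically dual to the paper's. The paper sets $u:=\phiv-K(V\phiv)-\lamv K\phiv$, first establishes $K(V\phiv)\in\Wo$ by noting that $V\phiv\,dx$ is a measure of \emph{finite energy integral} (this is an immediate consequence of $\phiv\in\Wo$ together with Hardy's inequality, since those give $\phiv\in L^2(V\,dx)$ and then $\int|fV\phiv|\le\gamma\,\calE_\Om[f]^{1/2}$ for $f\in C_c^\infty(\Om)$), then computes $\calE_\Om(u,g)=\calE_V(\phiv,g)-\lamv\int\phiv g=0$ for every $g\in\Wo$, and concludes $u=0$ from positive definiteness of $\calE_\Om$. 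You instead test the weak eigenvalue equation against $\psi=K\eta$, $\eta\in L^2(\Om)$, and transfer $K$ across the pairing by Fubini; formally this is the same computation (putting $g=K\eta$ in the paper's identity reproduces yours), but the integrability bookkeeping is done with entirely different tools: you invoke the pointwise bound $\phiv\lesssim w$ from Corollary \ref{ub}, dominate $G$ by the free Riesz kernel, and apply Lemma \ref{harmonic} to get $K(V\phiv)\lesssim|x|^{-\beta}\in L^2(\Om)$. Both proofs are complete. The paper's finite-energy-measure argument is lighter in its hypotheses — it uses only $\phiv\in\Wo$ and Hardy's inequality and is independent of the ultracontractivity machinery — and stays within the abstract Dirichlet-form framework; your route is more explicit and computational, at the cost of bringing in Corollary \ref{ub} (which is, as you note, available at this point in the text). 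Neither argument survives verbatim in the critical case, where $\phis\notin\Wo$ (Remark \ref{strict}); the paper handles that case by approximation in Lemma \ref{ground-rep2}.
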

\begin{proof}
Set
\begin{eqnarray}
u=\phiv-K(V\phiv)-\lamv K\phiv.
\end{eqnarray}
Owing to the fact that $\phiv$ lies in $\Wo$ and hence lies in $L^2(Vdx)$, we obtain that the  measure   $\phiv  V$ has finite energy integral with respect to the Dirichlet form $\calE_\Om$, i.e.,
\begin{eqnarray}
\int |f\phiv V|\,dx\leq \gamma(\calE_\Om[f])^{1/2},\ \forall\,f\in C_c^\infty(\Om),
\end{eqnarray}
and therefore  $K(V\phiv)\in\Wo$. Thus $u\in\Wo$ and satisfies the identity
\begin{eqnarray}
\calE_\Om(u,g)&=&\calE_\Om(\phiv,g)-\int\varp gV\,dx
-\lamv\int\phiv g\,dx\nonumber\\
&=&\calE_{V}(\phiv,g)-\lamv\int\phiv g\,dx=0,\ \forall\,g\in\Wo.
\end{eqnarray}
Since $\calE$ is positive definite  we conclude that $u=0\, a.e.$, which yields the result.
\end{proof}
We state now the main theorem of this section.
\begin{theo}\label{main1} Let $\Om\subset\R$ be a bounded domain containing the origin and  $0< c<c^*$.
\begin{enumerate}
\item There exist finite constants $C_1^V,\, C_2^V>0$ such that
\begin{eqnarray}
C_1^Vw\leq\phiv\leq C_2^Vw\ a.e.\ {\rm near}\ 0.
\end{eqnarray}
\item Assume that $\Om$ satisfies the uniform interior ball condition (as defined in \cite{kim2010}) then there is a finite constant $C^V>0$ such that
\begin{eqnarray}
\phiv\geq C^V\delta^{\alpha/2}\ a.e.
\end{eqnarray}
\item If furthermore $\Om$ is $C^{1,1}$ then there is finite constants $C_1^V,\, C_2^V$ such that
\begin{equation}
C_1^Vw\delta^{\alpha/2}\leq\phiv\leq C_2^Vw\delta^{\alpha/2}\ a.e.
\end{equation}
\end{enumerate}
\label{comparison1}

\end{theo}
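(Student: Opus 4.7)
The plan is to prove the three parts in sequence, using the ground state representation of Lemma \ref{ground-rep}, the weighted Dirichlet form $Q$ of Lemma \ref{closability}, and the ultracontractivity of $T_t^w$ from Theorem \ref{UC}.

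\textbf{Part 1.} The upper bound $\phiv\leq C_2^V w$ near $0$ is immediate from Corollary \ref{ub}: fix any $t>0$ to get $\phiv\leq A(t/2)^{-r/(r-2)}e^{t\lamv}w$ a.e.\ on $\Om$. For the lower bound $\phiv\geq C_1^V w$, I would transfer to the weighted picture. Set $\tilde\varphi_0:=\phiv/w$; by $L^w=w^{-1}L_V w$, this is the normalized principal eigenfunction of $L^w$ on $L^2(\Om,w^2\,dx)$. From the eigenvalue identity
\begin{equation*}
\tilde\varphi_0(x)=e^{t\lamv}\int_\Om p_t^w(x,y)\,\tilde\varphi_0(y)\,w(y)^2\,dy,
\end{equation*}
together with the bounded kernel $p_t^w$ granted by Theorem \ref{UC}, the irreducibility of $T_t^w$ inherited from $T_t^{L_V}$, and the strict positivity of $\tilde\varphi_0$ q.e., I would extract a uniform positive lower bound on $\tilde\varphi_0$ a.e.\ in a neighborhood of the origin. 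The key point is that since $2\beta<d$, the reference measure $w^2\,dx$ is locally finite at $0$, so $0$ is not a singular point for the measure.

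\textbf{Part 2.} Lemma \ref{ground-rep} gives $\phiv\geq\lamv K\phiv$. Interior H\"older regularity of $\phiv$ away from the origin (standard regularity for fractional elliptic equations with bounded coefficients, applicable since $V$ is bounded on $\Om\setminus B(0,r)$) combined with strict positivity q.e.\ produces a compact set $D\subset\Om\setminus\{0\}$ with $\phiv\geq c_0>0$ on $D$. Hence $K\phiv(x)\geq c_0\int_D G(x,y)\,dy$, and under the uniform interior ball condition the one-sided Green function estimate of \cite{kim2010} yields $\int_D G(x,y)\,dy\gtrsim\delta(x)^{\alp/2}$ near $\partial\Om$.

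\textbf{Part 3.} Combine Parts 1 and 2 with the two-sided $C^{1,1}$ Green function estimates of \cite{kim2010}. For the lower bound: near $0$, $\delta\sim 1$, so Part 1 gives $\phiv\gtrsim w\sim w\delta^{\alp/2}$; near $\partial\Om$, $w\sim 1$, so Part 2 gives $\phiv\gtrsim\delta^{\alp/2}\sim w\delta^{\alp/2}$; on any compact subset of $\Om\setminus\{0\}$ continuity and positivity of $\phiv$ close the estimate. For the upper bound, plug $\phiv\leq Cw$ into Lemma \ref{ground-rep} to obtain $\phiv\leq C\,K(Vw)+\lamv C\,Kw$, then dominate both $K(Vw)$ and $Kw$ by $C\,w\delta^{\alp/2}$ using the two-sided $C^{1,1}$ Green function estimates. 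The main obstacle lies in Part 1: showing that the bounded eigenfunction $\tilde\varphi_0$ of the weighted Dirichlet form $Q$ is a.e.\ bounded below by a positive constant in a neighborhood of the origin, a point where the jump kernel $w(x)w(y)|x-y|^{-d-\alp}$ is singular and where the usual interior Harnack principle for $(-\Delta)^{\alp/2}$ does not apply verbatim. Resolving this requires either a Harnack-type estimate for $L^w$ at $0$, or a direct lower bound on $p_t^w$ that bootstraps from positivity on a set of positive measure in $\Om\setminus\{0\}$ to a uniform lower bound near $0$.
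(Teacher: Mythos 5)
Your Parts 2 and 3 are essentially the paper's approach, modulo details: Part 2 combines Lemma \ref{ground-rep} with the lower Green function bound implied by the uniform interior ball condition, and Part 3 patches the origin and boundary regimes together using the two-sided $C^{1,1}$ Green function estimate. The paper does Part 2 more directly — it uses the factorized lower bound $G(x,y)\geq C\delta^{\alpha/2}(x)\delta^{\alpha/2}(y)$ so that $K\phiv(x)\geq C\delta^{\alpha/2}(x)\int\delta^{\alpha/2}(y)\phiv(y)\,dy$ immediately, with no need for a compact set $D$ — and does Part 3's upper bound by splitting $K\phiv$ and $K(V\phiv)$ into near- and off-diagonal pieces away from the origin rather than trying to prove a global pointwise domination of $K(Vw)$ and $Kw$ by $w\delta^{\alpha/2}$; but these are stylistic, not substantive, differences.

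The gap is exactly where you flagged it: the lower bound in Part 1. Ultracontractivity of $T_t^w$ gives an $L^\infty$ \emph{upper} bound on $p_t^w$, and irreducibility together with q.e.\ positivity of $\tilde\varphi_0$ only gives $\tilde\varphi_0>0$ a.e.; neither statement, nor their combination, produces a uniform lower bound on $\tilde\varphi_0$ in a neighborhood of $0$. What you would need is a local Harnack inequality for the weighted nonlocal form $Q$ near the singular point of the jump kernel $w(x)w(y)|x-y|^{-d-\alpha}$, or some intrinsic ultracontractivity-type \emph{lower} heat-kernel bound — and, as you note, neither is available at this stage (the paper's IUC result is in Section 5 and itself uses the ground state estimates you are trying to prove, so going that route would be circular). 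The paper avoids this entirely by staying in the \emph{unweighted} picture and running a barrier argument: it takes a mollifier $j$ with $j\equiv 1$ on $[0,1]$ and support in $[0,2]$, picks $C_\eta$ small enough that $\phiv>C_\eta j(|\cdot|/\eta)w$ on the annulus $\{\eta\leq|x|\leq 2\eta\}$, sets $\theta:=\phiv-C_\eta j(|\cdot|/\eta)w$, observes that $\theta^-\in W_0^{\alpha/2,2}(B_\eta)$, and then tests the form against $\theta^-$. Using the Dirichlet-form inequality $\mathcal{D}(\theta^+,\theta^-)\leq 0$, the eigenvalue equation $L_V\phiv=\lamv\phiv$, and crucially the \emph{harmonicity} $(-\Delta)^{\alpha/2}w-Vw=0$ from Lemmata \ref{harmonic}--\ref{range}, the cross terms cancel and one obtains $\|(\theta^-)^2\|_{L^r}\leq -C\lamv\int\phiv\theta^-\,dx\leq 0$, forcing $\theta^-\equiv 0$ on $B_\eta$. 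This is the missing ingredient in your argument: the fact that $w$ is an exact $|x|^{-\alpha}$-harmonic for the constant $B_\beta$ is what lets a maximum-principle argument replace the Harnack inequality you would otherwise need.
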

\begin{proof} 1. As $\phiv>0\ a.e.$, there is  $\epsilon>0$ and $\eta>0$ such that
\begin{eqnarray}
\phiv>\epsilon~~a.e~\hbox{in}~B_{2\eta}\subset\Om.
\end{eqnarray}
Choose  $C_\eta>0$ so  that    $ \epsilon>C_\eta r^{-s}$  for   $r\geq\eta$.\\
Let $0\leq j\leq 1$ be a mollifier  such that
\begin{eqnarray}
j\in C_c^\infty([0,\infty)),\ Supp\, j\subset [0,2]\ {\rm and}\ j=1\ {\rm on}
\ [0,1].
\end{eqnarray}
Now set $\theta$ the function defined by
\begin{eqnarray}
\theta(x)=\phiv(x)-C_\eta j(\frac{|x|}{\eta})w(x)\ a.e.
\end{eqnarray}
Then by \cite[Lemma 3.3]{frank} $\theta\in W_0^{\alpha/2,2}(\Om)$, yielding that $\theta^-\in W_0^{\alpha/2,2}(B_\eta)$.\\
Activating Sobolev inequality (\ref{sobo-free}) together with identity (\ref{Lapf}) and Lemma \ref{harmonic} and utilizing the fact that $\phiv$ is the ground state of $L_V$, we obtain:
\begin{eqnarray}
 \|(\theta^-)^2\|_{L^r}&\leq&C\big(\frac{1}{2}{\calA}(d,\alp)\int\int
 \frac{(\theta^{-}(x)-\theta^{-}(y))^2}{|x-y|^{d+\alpha}}\,dx\,dy-\int V(x){(\theta^{-})}^2(x)\,dx\big)\nonumber\\
  &\leq &-C\big(\frac{1}{2}{\calA}(d,\alp)\int\int\frac{(\theta(x)-\theta(y))
  (\theta^{-}(x)-\theta^{-}(y))}{|x-y|^{d+\alpha}}\,dx\,dy\nonumber\\
 &-&\int V(x)\theta(x)\theta^{-}(x)\,dx\big)\nonumber \\
 &= &-C\big(\frac{1}{2}{\calA}(d,\alp)\int\int\frac{(\phiv(x)-\phiv(y))(\theta^{-}(x)
 -\theta^{-}(y))}{|x-y|^{d+\alpha}}\,dx\,dy\nonumber\\
 &-&\int V(x)\phiv(x)\theta^{-}(x)\,dx\big)\nonumber\\
 &+&C_\eta C\big(\frac{1}{2}{\calA}(d,\alp)\int\int\frac{(w(x)-w(y))(\theta^{-}(x)
 -\theta^{-}(y))}{|x-y|^{d+\alpha}}\,dx\,dy\nonumber\\
 &-&\int V(x)w(x)\theta^{-}(x)\,dx\big)\nonumber\\
 &=&-C\lamv\int \phiv(x)\theta^{-}(x)\,dx +C C_\eta \int((-\Delta)^{\frac{\alpha}{2}}w(x)-V(x)w(x))\theta^{-}(x)\,dx\nonumber\\
 &=&-C\lamv\int \phiv(x)\theta^{-}(x)\,dx\leq 0.
\end{eqnarray}
In the 'passage' from the first to the second inequality, we used the fact that for any Dirichlet form ${\mathcal{D}}$ one has ${\mathcal{D}}(f^+,f^-)\leq 0$ (See \cite[Theorem 4.4-i)]{roeckner-ma}), whereas the equality before the last one is obtained with the help of the identity (\ref{Lapf}).\\
Finally we are led to  $\theta^{-}\equiv 0$ in $B_\eta$ yielding $\theta=\theta^+$ in $B_\eta$ and hence
\begin{eqnarray}
\phiv\geq C_\eta w\ a.e.\ {\rm on}\ B_\eta,
\end{eqnarray}
which together with Corollary \ref{ub} leads to $\phiv\sim w$ near the origin.\\
2. 
%
%
%
Relying on  \cite[Theorem 3.1]{kim2010} one ca prove by mean of a direct computation that for every bounded domain fulfilling the uniform interior ball condition the Green function satisfies the lower bound
\begin{eqnarray}
G(x,y)\geq C|x-y|^{-d+\alpha}\big(1\wedge\frac{\delta^{\alpha/2}(x)
\delta^{\alpha/2}(y)}{|x-y|^\alpha}\big),\ x\neq y.
\label{green-sharp0}
\end{eqnarray}
Hence
\begin{eqnarray}
G(x,y)\geq C\delta^{\alpha/2}(x)
\delta^{\alpha/2}(y).
\label{LBG}
\end{eqnarray}
New the desired lower bound for $\phiv$ is obtained by using formula (\ref{ground-rep}).\\
3. It suffices to prove that there is a finite constant $C$ such that $\phiv\leq C\delta^{\alp/2}$ away from the origin.\\
Owing to the $C^{1,1}$ regularity of $\Om$, one has by \cite[Corollary 1.2]{kim2010} the following sharp estimate for the Green kernel of $L_0$:
\begin{eqnarray}
G(x,y)\sim|x-y|^{-d+\alpha}\big(1\wedge\frac{\delta^{\alpha/2}(x)
\delta^{\alpha/2}(y)}{|x-y|^\alpha}\big),\ x\neq y.
\label{green-sharp}
\end{eqnarray}
We use once again formula (\ref{ground-rep}). Fix $\rho>0$. Let $x\in\Om$ be such that $|x|>\rho$. We decompose the integral
\begin{eqnarray}
  K\phiv &=&\int_{|x-y|<\varepsilon} G(x,y)\phiv(y)dy + \int_{|x-y|\geq\varepsilon} G(x,y)\phiv(y)dy\nonumber\\
&=\colon&  I_1+I_2.
\end{eqnarray}
If  $ |x-y|\geq\varepsilon$ then by (\ref{green-sharp}) there is a constant $0<C_{sim}<\infty$ depending solely on $\Om,\ d$ and $\alp$ such that
\begin{eqnarray}
 I_2\leq C_{sim}\varepsilon^{-d+2\alpha}\delta(x)^{\alpha/2}\int_{\Om}{\delta^{\alpha/2}(y)}\phiv(y)\,dy.
\end{eqnarray}
Observe that
\begin{eqnarray}
A_2^V:=\int_{\Om}{\delta^{\alpha/2}(y)}\phiv(y)\,dy<\infty.
\end{eqnarray}
On the other hand  owing to the fact that $\delta(x)\leq \delta(y)+|x-y|$ we obtain
\begin{eqnarray}
\big(1\wedge\frac{\delta(x)\delta(y)}{|x-y|^2}\big)\leq 2\big(1\wedge\frac{\delta(x)}{|x-y|}\big)\big(1\wedge\frac{\delta(y)}{|x-y|}\big),\ x\neq y.
\end{eqnarray}
Having the upper estimate on $\phiv$ in mind, (Corollary \ref{ub}), we therefore get
\begin{eqnarray}
I_1\leq C_{sim}c_t^V\delta^{\alpha/2}(x)\int_{|x-y|<\epsilon}
\frac{|y|^{-\beta}}{|x-y|^{d-\frac{\alpha}{2}}}\,dy.
\end{eqnarray}
Now if    $|x-y|<\varepsilon $ then
\begin{eqnarray}
|y|\geq |x|-|x-y|\geq \varrho-\varepsilon>0,
\end{eqnarray}
yielding
\begin{eqnarray}
I_1\leq C_{sim}c_t^V(\varrho-\epsilon)^{-\beta}\delta(x)^{\alpha/2}
\sup_{x\in\Om}\int_{\Om}\frac{dy}{|x-y|^{d-\frac{\alpha}{2}}},
\end{eqnarray}
with
\begin{eqnarray}
\sup_{x\in\Om}\int_{\Om}\frac{dy}{|x-y|^{d-\frac{\alpha}{2}}}<\infty.
\end{eqnarray}
Finally the term $K^V\phiv$ can be estimated from above by the same manner to obtain
\begin{eqnarray}
K^V\phiv&\leq& C_{sim}\varepsilon^{-d+2\alpha}\delta(x)^{\alpha/2}
 \int_{\Om}{\delta^{\alpha/2}(y)}\phiv(y)V(y)\,dy\\
 &+&C_{sim}c_t^V(\varrho-\epsilon)^{-2\beta}\delta(x)^{\alpha/2}
\sup_{x\in\Om}\int_{\Om}\frac{dy}{|x-y|^{d-\frac{\alpha}{2}}}\,,
\end{eqnarray}
away from the origin and
\begin{eqnarray}
A_1^V:=\int_{\Om}{\delta^{\alpha/2}(y)}\phiv(y)V(y)\,dy<\infty.
\end{eqnarray}
Matching all together yields the claim, which  completes the proof.
\end{proof}
\begin{rk}{\rm
%
%
%
%
From the proof of the latter theorem we learn that the $C^{1,1}$ assumption is involved only to determine the maximal decay rate of the ground state at the boundary. It is not clear for us whether this assumption is necessary or not.

}

\end{rk}
\section{The critical case}
The critical case differs in some respects from the subcritical one. The most apparent difference is that the critical  quadratic form is no longer closed on the starting fractional Sobolev space $\Wo$. Consequently the proof of Lemma \ref{ground-rep} is no more valid to express the ground state for the simple reason that it may not belongs to $\Wo$. We shall in fact show by the end of this section that $\phiv\not\in\Wo$ in the critical case. Also the reasoning in the proof of assertion 1. of the last theorem breaks down.\\
We shall however prove that the critical form is closable and has compact resolvent by mean of a Doob-transform. An approximation process will then lead to extend the identity of Lemma \ref{ground-rep} helping therefore to get the sharp estimate of the ground state.\\
Let $\dot\calE_*$ be the quadratic form defined by
\begin{eqnarray}
D(\dot\calE_*)=\Wo,\ \dot\calE_*[f]=\calE_\Om[f]-c^*\int \frac{f^2(x)}{|x|^\alpha}\,dx,\ \forall\,f\in W_0^{\alpha/2,2}(\Om).
\end{eqnarray}
By analogy to the subcritical case we define the $w_*$-transform of $\dot\calE_*$ which we denote by $\dot Q_*$ and is defined by
\begin{eqnarray}
D(\dot Q_*):=\{f\colon\,w_*f\in\Wo\}\subset L^2(\Om,w_*^2dx),\ \dot Q_*[f]=\dot\calE_*[w_*f],\ \forall\,f\in\,D(\dot Q_*).
\end{eqnarray}
Following the computations made in the proof of Lemma \ref{closability} we realize that $\dot Q_*$ has the following representation
\begin{eqnarray}
\dot Q_*[f]=\frac{\mathcal{A}(d,\alp)}{2}\il\il \frac{(f(x)-f(y))^2}{|x-y|^{d+\alpha}} w_*(x)w_*(y)\,dxdy,\ \forall\,f\in D(\dot Q_*).
\end{eqnarray}
\begin{lem} The form $\dot Q_*$ is closable in $L^2(\Om,w_*^2dx)$. Furthermore its closure  is a Dirichlet form in  $L^2(\Om,w_*^2dm)$.\\
It follows, in particular that $\dot\calE_*$ is closable.
\label{closability2}
\end{lem}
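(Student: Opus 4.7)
The strategy exploits the non-negative integral representation of $\dot Q_*$ from Lemma \ref{closability}:
\[
\dot Q_*[f]=\cf\int\int\frac{(f(x)-f(y))^2}{|x-y|^{d+\alpha}}w_*(x)w_*(y)\,dx\,dy,\qquad f\in D(\dot Q_*),
\]
which presents $\dot Q_*[f]$ as a squared $L^2$-norm of $(x,y)\mapsto f(x)-f(y)$ against the positive symmetric jump measure $J(x,y)\,dx\,dy:=\cf|x-y|^{-(d+\alpha)}w_*(x)w_*(y)\,dx\,dy$. Positivity of $J$ is what drives everything: closability and the Markov property will follow from it by standard Fatou/Beurling--Deny type arguments on non-local forms, and closability of $\dot\calE_*$ will then transfer via the Doob unitary.

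For closability, suppose $(f_n)\subset D(\dot Q_*)$ is $\dot Q_*$-Cauchy with $f_n\to 0$ in $L^2(\Om,w_*^2 dx)$. Since $w_*^2>0$ a.e.\ on $\Om$, a subsequence satisfies $f_n\to 0$ a.e., so $(f_n(x)-f_n(y))\to 0$ a.e.\ on $\Om\times\Om$. The sequence is Cauchy in $L^2(\Om\times\Om,J\,dxdy)$; its $L^2(J\,dxdy)$-limit must agree with its a.e.\ limit, so it converges to $0$ and $\dot Q_*[f_n]\to 0$. The same Fatou reasoning applied along an arbitrary form-Cauchy approximating sequence shows that the jump-kernel formula persists on the closure: $\overline{\dot Q}_*[f]=\cf\int\int(f(x)-f(y))^2|x-y|^{-(d+\alpha)}w_*(x)w_*(y)\,dxdy$ for every $f\in D(\overline{\dot Q}_*)$.

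Granted this representation, Markovianity follows from pointwise inequalities: for any normal contraction $T$, $|Tf(x)-Tf(y)|\le|f(x)-f(y)|$ and $|Tf|\le|f|$ give $Tf\in L^2(\Om,w_*^2 dx)$ together with the energy bound $\cf\int\int(Tf(x)-Tf(y))^2|x-y|^{-(d+\alpha)}w_*(x)w_*(y)\,dxdy\le\overline{\dot Q}_*[f]$. The membership $Tf\in D(\overline{\dot Q}_*)$ is then obtained by identifying $\overline{\dot Q}_*$ with the maximal jump-kernel form on $\tilde D:=\{f\in L^2(\Om,w_*^2 dx)\colon\int\int(f(x)-f(y))^2 J\,dxdy<\infty\}$, the latter being trivially closed (Fatou) and manifestly Markovian. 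Finally, closability of $\dot\calE_*$ is inherited from that of $\dot Q_*$ via the unitary isomorphism $g\mapsto g/w_*$ from $L^2(\Om,dx)$ onto $L^2(\Om,w_*^2 dx)$, which identifies $\Wo=D(\dot\calE_*)$ with $D(\dot Q_*)$ and preserves the form values through $\dot\calE_*[g]=\dot Q_*[g/w_*]$.

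The main obstacle is exactly the identification $\overline{\dot Q}_*=\tilde Q_*$, equivalently the stability of $D(\overline{\dot Q}_*)$ under normal contractions: the Doob correspondence $f\leftrightarrow w_*f$ turns an ordinary truncation of $f$ into the non-standard $w_*$-weighted contraction $g\mapsto w_*T(g/w_*)$ on $g\in\Wo$, whose preservation of $\Wo$ is not obvious near the singular point $0$. The natural way to supply this is by truncating $f\in\tilde D$ away from the origin with a cutoff $\phi_n$ vanishing on a shrinking ball around $0$, noting that on the support of $\phi_n$ the weight $w_*$ is bounded and bounded away from zero, and then using the inclusion $C_c^\infty(\Om)\subset D(\dot Q_*)$ (verified by the very integral estimate already carried out in the proof of Lemma \ref{closability}, whose borderline bound $\beta=\beta_c$ still lies in the admissible range $\beta<(d+2-\alpha)/2$) to smooth. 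This truncation-and-smoothing step is the technical place that requires genuine care.
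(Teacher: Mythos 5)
Your overall route is the same as the paper's: use the jump-kernel identity inherited from Lemma \ref{closability} to view $\dot Q_*$ as a restriction of the maximal form $\tilde Q$ on $\tilde D=\{f\in L^2(\Om,w_*^2dx):\iint(f(x)-f(y))^2 J\,dxdy<\infty\}$, get closability by a Fatou argument, Markovianity from the pointwise contraction of the integrand, and then transfer back to $\dot\calE_*$ through the Doob unitary. Your closability paragraph is complete and in fact cleaner than the paper's appeal to Beurling--Deny--LeJan, which is invoked there only to handle $C_c^\infty(\Om)$ inside $\tilde D$ and to license the Dirichlet-form conclusion; the paper then asserts, essentially without argument, that $D(\dot Q_*)\subset D(\tilde Q)$ lets one conclude $\dot Q_*$ ``is closable and Markovian as well.'' You are right to be uncomfortable with that step: what makes the closure Markovian is precisely that its domain is stable under normal contractions, and neither the paper nor your write-up actually proves $D(\overline{\dot Q}_*)$ has this stability (equivalently, that $\overline{\dot Q}_*=\tilde Q$, or that $\dot Q_*$ itself satisfies the FOT condition $(\calE.4)$).

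Where your suggested repair goes wrong is in the claim that $C_c^\infty(\Om)\subset D(\dot Q_*)$. At the critical exponent $\beta_c=(d-\alpha)/2$ this is false: $w_*(x)=|x|^{-\beta_c}$ fails to belong to $W^{\alpha/2,2}_{\rm loc}$ near $0$ (the dyadic contribution at scale $r$ to the Gagliardo seminorm is of order $r^{\,d-\alpha-2\beta_c}=r^0$, so the sum over scales diverges), hence $w_*\varphi\notin\Wo$ for any test function $\varphi$ with $\varphi(0)\neq 0$, i.e.\ $\varphi\notin D(\dot Q_*)$. The integral estimate carried out in Lemma \ref{closability}, which you invoke, establishes only the weaker inclusion $C_c^\infty(\Om)\subset D(\tilde Q)$ --- this is exactly what the paper checks, and the distinction $D(\tilde Q)\supsetneq D(\dot Q_*)$ is precisely the gap under discussion. (The remark after Theorem \ref{main-critical}, that $\phis\notin\Wo$ while $\phis\in D(\calE_*)$, makes this strict inclusion explicit at the level of the form $\calE_*$.) A repair along the lines you sketch must therefore truncate to $C_c^\infty(\Om\setminus\{0\})\subset D(\dot Q_*)$, and one then has to show that such truncations are dense in $D(\tilde Q)$ for the form norm; at the critical coupling the logarithmic divergence of the Hardy quotient makes that a non-trivial claim, and it is not supplied either in your proposal or in the paper's proof.
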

\begin{proof} We first mention that since $\dot\calE_*$ is densely defined then $\dot Q_*$ is densely defined as well.\\
Now we proceed to show that $\dot Q_*$ possesses a closed extension. To that end we introduce the form $\tilde Q$ defined by
\begin{eqnarray}
&D(\tilde Q)&:=\big\{f\colon\,f\in L^2(\Om,w_*^2dx),\  \il\il \frac{(f(x)-f(y))^2}{|x-y|^{d+\alpha}} w_*(x)w_*(y)\,dxdy<\infty\big\}\nonumber\\
&\tilde Q[f]&=\frac{\mathcal{A}(d,\alp)}{2}\il\il \frac{(f(x)-f(y))^2}{|x-y|^{d+\alpha}} w_*(x)w_*(y)\,dxdy,\ \forall\,f\in D(\tilde Q).
\end{eqnarray}
Arguing as in the proof of Lemma \ref{closability}, we obtain  that $C_c^{\infty}(\Om)\subset D(\tilde Q)$.\\
Hence from the Beurling--Deny-LeJan formula (see \cite[Theorem 3.2.1, p.108]{fuku-oshima}), the form $\tilde Q$ is the restriction to $C_c^{\infty}(\Om)$ of a Dirichlet form and therefore closable and Markovian. Since $D(\dot Q_*)\subset D(Q)$ we conclude that $\dot Q_*$ is closable and Markovian as well, yielding that its closure is a Dirichlet form. Now the closability of $\dot\calE_*$ is an immediate consequence of the closability of $\dot Q_*$  which finishes the proof.
\end{proof}
From now on we set $\calE_*$  the closure of $\dot\calE_*$ and $L_*$ the selfadjoint operator related to $\calE_*$, respectively $Q_*$ the closure of $\dot Q_*$ and $H_*$ its related selfadjoint operator. Finally $T_t^*:=e^{-tL_*},\ t>0$ and $S_t:=e^{-tH_*},\ t>0$. Obviously $H_*=w_*^{-1}L_*w_*$.\\
The development of this section depends heavily on the following improved Sobolev inequality due to Frank--Lieb--Seiringer \cite[Theorem 2.3]{frank}: For every $2<p<\frac{2d}{d-\alp}$ there is a constant $S_{d,\alp}(\Om)$ such that
\begin{eqnarray}
(\int |f|^p\,dx)^{2/p}\leq S_{d,\alp}(\Om)\dot\calE_*[f],\ \forall\,f\in\Wo,
\label{ISI}
\end{eqnarray}
Of course the latter inequality extends to the elements of $D(\calE_*)$ with $\dot\calE_*$ replaced by $\calE_*$.
The idea of using improved Sobolev type inequality to get estimates for  the ground state was already used in \cite{benamor-osaka, davilla-dupaigne}.
\begin{theo} For every $t>0$, the operator $S_t$ is ultracontractive. It follows that
\begin{itemize}
\item[i)]The operators $S_t,\ t>0$ and hence $T_t^*,\ t>0$ are  Hilbert-Schmidt operators and the operator $L_{*}$ has a  compact resolvent.
\label{spec-prop1}
\item[ii)] $ker(L_*-\lam_0^*)=\mathbb{R}\phis$ with $\phis> 0\ a.e.$
\item[iii)] If $\Omega$ satisfies the uniform interior ball condition then
\begin{equation}
\phis(x)\geq\big( C_G \lam_0^*\int\delta(y)^{\alpha/2}\phis(y)\,dy\big)\delta(x)^{\alpha/2},\ a.e.
\label{estimate-eigenfunction}
\end{equation}
\item[iv)] $\phis\leq C_te^{\lam_0^*t}w_*$.
\end{itemize}
\label{ground-critical}
\end{theo}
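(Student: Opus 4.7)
The plan is to establish ultracontractivity of $S_t$ first, from which (i), (ii), and (iv) follow along relatively standard lines, while (iii) will require a separate representation-type argument.

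For the ultracontractivity (the key ingredient), I would adapt the strategy of Theorem \ref{UC} to the critical setting, replacing the classical Sobolev inequality (\ref{sobo-free}) by the improved Sobolev inequality (\ref{ISI}). Fix any $p\in(2,2d/(d-\alpha))$. Applying (\ref{ISI}) to $g=w_*f$ yields
\[
\Bigl(\int(w_*|f|)^{p}\,dx\Bigr)^{2/p}\le S_{d,\alpha}(\Om)\,Q_*[f],\qquad f\in D(Q_*).
\]
Since $w_*(x)|x|^{\beta_c}=1$, we have $\int f^2\,dx=\int(w_*f)^2|x|^{2\beta_c}\,dx\le(\sup_\Om|x|^{2\beta_c})\int(w_*f)^2\,dx$; combining with the Poincaré-type bound $\int g^2\,dx\le(1/\lam_0^*)\calE_*[g]$ (which follows directly from (\ref{ISI}) and Hölder on the bounded domain $\Om$, and in particular ensures $\lam_0^*>0$) gives $\int f^2\,dx\le(\sup_\Om|x|^{2\beta_c}/\lam_0^*)\,Q_*[f]$. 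Hölder with exponents $p/2$ and $p/(p-2)$ then produces
\[
\int w_*^2|f|^{2q}\,dx\le\Bigl(\int(w_*|f|)^p\,dx\Bigr)^{2/p}\Bigl(\int f^2\,dx\Bigr)^{1-2/p}\le A\,Q_*[f]^q,
\]
with $q:=2-2/p>1$. This is a Sobolev-type inequality for $Q_*$ in $L^2(\Om,w_*^2dx)$, from which \cite[Theorems 4.1.2--4.1.3]{saloff-coste} yields ultracontractivity of $S_t$.

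From here, (i) is routine: as $w_*^2dx$ is a finite measure on $\Om$ (because $2\beta_c=d-\alp<d$), an ultracontractive $S_t$ is automatically Hilbert--Schmidt; the unitary equivalence $T_t^*=w_*S_tw_*^{-1}$ transfers this to $T_t^*$ and yields compactness of the resolvent of $L_*$. For (ii), the domination $T_t^*\ge T_t$ (via Trotter--Kato, or by taking limits of the subcritical semigroups as $c_n\uparrow c^*$) shows that $T_t^*$ has a strictly positive kernel and hence is irreducible; Perron--Frobenius then gives simplicity of $\lam_0^*$ and positivity of $\phis$ a.e. For (iv), set $\psi:=w_*^{-1}\phis\in L^2(\Om,w_*^2dx)$, which is a normalized eigenfunction of $H_*$; then $\psi=e^{t\lam_0^*}S_t\psi$ pointwise a.e., and the ultracontractivity estimate $\|S_t\|_{L^2(w_*^2dx)\to L^\infty}\le C_t^{1/2}$ yields $\|\psi\|_\infty\le C_te^{t\lam_0^*}$, i.e.\ $\phis\le C_te^{t\lam_0^*}w_*$.

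The genuinely delicate point is (iii). My plan is to establish the pointwise inequality $\phis\ge\lam_0^*K\phis$ a.e., after which the Green-function lower bound (\ref{LBG}) gives
\[
\phis(x)\ge\lam_0^*\int G(x,y)\phis(y)\,dy\ge C_G\lam_0^*\,\delta^{\alp/2}(x)\int\delta^{\alp/2}(y)\phis(y)\,dy,
\]
which is exactly (\ref{estimate-eigenfunction}). Heuristically, since $V_{c^*}\ge0$, we have $L_0\phis=\lam_0^*\phis+V_{c^*}\phis\ge\lam_0^*\phis$, so applying the positivity-preserving $K=L_0^{-1}$ gives the claim. To make this rigorous, I would use an approximation argument from the subcritical machinery of Section 3: choosing $c_n\uparrow c^*$ and $V_n:=c_n|x|^{-\alp}$, Lemma \ref{ground-rep} yields $\phi_0^{V_n}\ge\lam_0^{V_n}K\phi_0^{V_n}$, and I would pass to the limit after proving $\phi_0^{V_n}\to\phis$ and $\lam_0^{V_n}\to\lam_0^*$ (e.g.\ via Mosco convergence of $\calE_{V_n}$ to $\calE_*$ together with the resolvent compactness just established). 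The main obstacle is precisely this passage to the limit: since $\phis$ need not lie in $\Wo$, the convergence must be controlled through the weighted-space framework provided by $Q_*$, and the local integrability of $V_{c^*}\phis$ near the origin must be handled with care.
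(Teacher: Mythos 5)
Your treatment of the ultracontractivity of $S_t$ and of items (i), (ii), (iv) is essentially the same as the paper's: adapt Theorem \ref{UC} to the critical setting by replacing (\ref{sobo-free}) with the improved Sobolev inequality (\ref{ISI}) applied to $g=w_*f$, combine with the Poincar\'e bound via H\"older; then use $w_*\in L^2(\Om)$ for Hilbert--Schmidt, irreducibility for (ii), and the $L^2\to L^\infty$ bound applied to the stationary eigenfunction of $H_*$ for (iv).

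Where you diverge is in (iii), and here your route is substantially harder than it needs to be. You propose establishing $\phis\ge\lam_0^*K\phis$ by taking $c_n\uparrow c^*$, invoking Lemma \ref{ground-rep} for the subcritical eigenfunctions $\phi_0^{V_n}$, and passing to the limit --- and you correctly identify this limit passage as the sticking point, since $\phis\notin\Wo$ and $V_{c^*}\phis$ needs integrability control near the origin. (The paper does in fact carry out such an approximation, but in the later Lemma \ref{ground-rep2}, and only because the full identity $\phis=K^{V_*}\phis+\lam_0^*K\phis$ is needed for the sharp upper bound in Theorem \ref{main-critical}; it is not needed for (iii).) The paper's own proof of (iii) is far more direct: once (i) is established, $L_*$ has a Green kernel $G_*$, and since $L_*=L_0-V_{c^*}\le L_0$ in form sense one has $T_t^*\ge T_t\ge 0$ and therefore $G_*\ge G$ pointwise; then the eigenequation $\phis=\lam_0^*G_*\phis$ (valid in $L^2(\Om,dx)$ because $\phis$ is a genuine eigenfunction of $L_*$ and $G_*=L_*^{-1}$ is everywhere defined and bounded) immediately gives $\phis\ge\lam_0^*G\phis$, and the lower Green bound (\ref{LBG}) under the uniform interior ball condition finishes it. This sidesteps entirely the questions of Mosco convergence, convergence of eigenfunctions, and integrability of $V_{c^*}\phis$ that your plan would have to resolve. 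If you wish to keep your route, the cleanest patch is precisely to replace the limit argument by the resolvent monotonicity $G_*\ge G$, which is what the paper does.
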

\begin{proof} The proof that $S_t,\ t>0$ is ultracontractive runs as the one corresponding to the subcritical case with the help of Lemma \ref{closability2} and inequality (\ref{ISI}) as main ingredient.\\
i)  Every ultracontractive operator has an almost everywhere bounded kernel and since $w_*\in L^2(\Om)$ one get that $S_t,\ t>0$ is a Hilbert-Schmidt operator as well as $T_t^*$ and hence $L_*$ has compact resolvent.\\
ii) Since $T_t^*,\ t>0$ has a nonnegative kernel it is irreducible and the claim follows from the well know fact that the generator of every irreducible semigroup has a nondegenerate ground state energy with a.e. nonnegative ground state.\\
iii) The fact that $T_t^*$ is a Hilbert-Schmidt operator yields that $L_*$ possesses a Greeen kernel, $G_*$ and that $G_*\geq G$. Writing
\begin{eqnarray}
\phis=\lam_0^*\int G_*(\cdot,y)\phis(y)\,dy\geq\lam_0^*\int G(\cdot,y)\phis(y)\,dy
\end{eqnarray}
and using the lower bound (\ref{LBG}) yields the result.\\
iv) Follows from the fact that $S_t,\ t>0$ is ultracontracive.
\end{proof}

Let $0<c_k\uparrow c_*$. Then $L_k:=L-\frac{c_k}{|x|^\alpha}$ increases in the strong resolvent sense to $L_*$. Since $L_*$ has compact resolvent, the latter convergence is even uniform (see \cite[Lemma 2.5]{bbb}). Thus setting $\lam_0^{(k)}$'s the ground state energy  of the $L_k$'s and $\phiok$ its associated ground state, we obtain
\begin{eqnarray}
\lam_0^{(k)}\to\lam_0^*\ {\rm and}\ \phiok\to\phis\ {\rm in}\ L^2(\Om,dx).
\label{approxi}
\end{eqnarray}
For an accurate description of the behavior of the ground state, we shall extend formula (\ref{ground-rep}) to $\phis$.
\begin{lem} We have
\begin{eqnarray}
\phis&=&K^{V_*}\phis+\lam_0^* K\phis.
\end{eqnarray}
\label{ground-rep2}
\end{lem}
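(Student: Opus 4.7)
The strategy is approximation from the subcritical side. Pick $c_k\uparrow c^*$ and set $V_k:=c_k|x|^{-\alp}$, $L_k:=L_0-V_k$. Each $L_k$ sits in the subcritical regime of Section~\ref{ground-rep}, so Lemma~\ref{ground-rep} applied to $L_k$ yields
\begin{equation}\label{pl-app}
\phiok \;=\; K(V_k\phiok) + \lamok\, K\phiok\qquad\text{a.e.\ on }\Om,
\end{equation}
for every $k\in\N$. The target identity will be obtained by letting $k\to\infty$ in \eqref{pl-app} and exploiting the convergence \eqref{approxi}.

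By \eqref{approxi}, $\phiok\to\phis$ in $L^2(\Om,dx)$ and $\lamok\to\lam_0^*$; extracting a subsequence we may assume $\phiok\to\phis$ pointwise a.e. Since $K$ is compact on $L^2(\Om)$ (the operator $L_0$ has compact resolvent), $\lamok\,K\phiok\to\lam_0^*\,K\phis$ in $L^2$ and, along a further subsequence, pointwise a.e. For the potential term $K(V_k\phiok)(x)=\int_\Om G(x,y)V_k(y)\phiok(y)\,dy$, the non-negative integrands converge pointwise a.e.\ to $G(x,y)V_*(y)\phis(y)$ (because $V_k\uparrow V_*$ and $\phiok\to\phis$ a.e.), so Fatou's lemma delivers
\[
K^{V_*}\phis(x) \;\leq\; \liminf_k K(V_k\phiok)(x)\qquad\text{a.e.,}
\]
and together with the other two convergences this yields the one-sided inequality $\phis \geq K^{V_*}\phis + \lam_0^*\,K\phis$ a.e.

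For the reverse direction I aim at dominated convergence in $K(V_k\phiok)$. A natural dominant is a constant multiple of $V_* w_*=c^*|x|^{-\alp-\beta_c}$: indeed $V_*w_*=c^*|\cdot|^{-\alp}w_*$, so Lemma~\ref{harmonic} with $\beta=\beta_c$ together with the Riesz-kernel majorization $G(x,y)\leq C|x-y|^{\alp-d}$ gives $K(V_*w_*)(x)\leq C'w_*(x)<\infty$ for a.e.\ $x\neq 0$. To legitimate this I need a $k$-uniform pointwise majorization $\phiok(x)\leq C\,w_*(x)$ on $\Om$. Granted such a bound, $V_k\phiok \leq CV_*w_*$ is $G(x,\cdot)$-integrable uniformly in $k$, and dominated convergence gives $K(V_k\phiok)\to K^{V_*}\phis$ a.e., closing the identity.

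\textbf{Main obstacle.} The decisive step is producing the $k$-uniform pointwise bound $\phiok\leq C\,w_*$: the constant appearing in Corollary~\ref{ub} involves a factor comparable to $(1-B_{\beta_k}/c^*)^{-1/r}$, which degenerates as $\beta_k\uparrow\beta_c$, so Corollary~\ref{ub} alone does not suffice. I expect this to be handled either by transporting the ultracontractive upper bound of Theorem~\ref{ground-critical}(iv) through the Doob transform of the approximating problems and checking that the relevant constants remain controlled in $k$, or by a direct bootstrap exploiting \eqref{pl-app} itself together with Hardy's inequality \eqref{HI} to reinject the potential term into a uniform a priori estimate.
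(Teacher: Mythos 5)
Your strategy --- specialize Lemma~\ref{ground-rep} to the subcritical approximations $L_k$, then send $k\to\infty$ using \eqref{approxi} --- is exactly the paper's strategy; the paper's proof is the single sentence ``Use formula (\ref{ground-rep}) for $\phiok$'s, $\lamok$'s, then pass to the limit and use the fact that $K$ is bounded from $L^1$ into itself.'' Where you differ from the paper is only in bookkeeping: you pass to the limit in $\lamok K\phiok$ via $L^2$-compactness of $K$, the paper via $L^1$-boundedness of $K$; both are legitimate and handle that term with no issue.

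The substantive part of your write-up is your identification of the real difficulty, namely justifying $K(V_k\phiok)\to K^{V_*}\phis$, and here your diagnosis is correct and worth recording. Fatou gives $K^{V_*}\phis\leq \phis-\lam_0^*K\phis$ a.e.\ cheaply, but the reverse inequality --- which is what makes this a genuine identity --- needs either a $k$-uniform majorant for $V_k\phiok$ or some other form of uniform integrability, and you are right that Corollary~\ref{ub} cannot supply it: the factor $(1-B_{\beta_k}/c^*)^{-1}$ that enters the Sobolev constant for the Doob-transformed form (cf.\ the step $\calE_\Om[w_kf]\leq (1-B_{\beta_k}/c^*)^{-1}Q_k[f]$ hidden in the proof of Theorem~\ref{UC}) degenerates as $\beta_k\uparrow\beta_c$, so the pointwise bound $\phiok\leq C_k\,|x|^{-\beta_k}$ is not $k$-uniform. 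The paper's invocation of ``$K$ bounded on $L^1$'' does not obviously close this gap either: it gives $L^1$-convergence of $K(V_k\phiok)$ to $\phis-\lam_0^*K\phis$ (since that is $\phiok-\lamok K\phiok$, which converges in $L^1$), but identifying this limit as $K^{V_*}\phis$ still requires knowing that $V_k\phiok\to V_*\phis$ in $L^1$ (or at least weakly), not merely a.e.

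So: your proposal is the same route as the paper's, but you have stopped at --- and, usefully, flagged --- a step that the paper's one-line proof does not visibly address. To actually finish along your lines you would need one of the ingredients you list, e.g.\ a $k$-uniform bound $\phiok\leq C\,w_*$ (which would make $V_k\phiok\leq C\,V_*w_*\in L^1$ a legitimate dominant, noting $\alpha+\beta_c=(d+\alpha)/2<d$ so $V_*w_*\in L^1$), or a direct uniform-integrability argument for $\{V_k\phiok\}_k$; as it stands the proof is incomplete. You have not made an error --- you have exposed what appears to be an unstated assumption in the paper's own argument.
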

\begin{proof} Use formula (\ref{ground-rep}) for $\phiok$'s, $\lamok$'s then pass to the limit and use the fact that $K$ is bounded from $L^1$ into itself.
\end{proof}
Now having assertion (iv) of Theorem \ref{ground-critical} in hands together with Lemma \ref{ground-rep2}, imitating the final part of the proof of Theorem \ref{main1} yields that there is a finite constant $C$ such that $\phis\leq C\delta^{\alpha/2}$ away from the origin, provided that $\Om$ is $C^{1,1}$. We thus achieve the following description of $\phiv$ for $C^{1,1}$ domains
\begin{eqnarray}
\phis\leq C\delta^{\alpha/2} w_*\ a.e.\ {\rm and}\ \phiv\sim\delta^{\alp/2}\ {\rm away\ from}\ 0.
\end{eqnarray}
Finally we resume.
\begin{theo} Let $\Om$ be a bounded domain containing the origin. Then
\begin{enumerate}
\item $\phis\sim w$  near the origin.
\item If furthermore $\Om$ is $C^{1,1}$ then
\begin{eqnarray}
\phis\sim\delta^{\alpha/2} w_*.
\end{eqnarray}
\end{enumerate}
\label{main-critical}
\end{theo}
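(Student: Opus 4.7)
My plan is to split the theorem into the local claim $\phis\sim w_*$ near the origin and the global claim for $C^{1,1}$ domains, and to observe that the second reduces to the first together with bounds already in place. Fix $\eta>0$ with $B_{2\eta}\subset\Om$, possible because $0\in\Om$; then $\delta$ is pinched between positive constants on $B_{2\eta}$, while $w_*(x)=|x|^{-\beta_c}$ is pinched between positive constants on $\Om\setminus B_\eta$. Hence the four one-sided estimates --- $\phis\leq Cw_*$ near $0$ from Theorem \ref{ground-critical}(iv), $\phis\leq C\delta^{\alpha/2}$ away from $0$ as sketched just before the theorem, $\phis\geq c\delta^{\alpha/2}$ globally from Theorem \ref{ground-critical}(iii) (using that $C^{1,1}$ implies the uniform interior ball condition), and $\phis\geq cw_*$ near $0$ --- patch together to give assertion 2. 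So only the local lower bound $\phis\geq cw_*$ is new, and proving it completes both assertions.

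For this lower bound I would adapt the cutoff-and-test argument from the proof of Theorem \ref{comparison1}(1), substituting the improved Sobolev inequality (\ref{ISI}) of Frank--Lieb--Seiringer (valid on $D(\calE_*)$) for the ordinary Sobolev embedding (\ref{sobo-free}). Using that $\phis>0$ a.e.\ together with $\phis=e^{t\lam_0^*}T_t^*\phis$ and the strictly positive kernel of $T_t^*$ (which dominates that of $T_t$), fix $\eta,\varepsilon>0$ with $B_{2\eta}\subset\Om$ and $\phis\geq\varepsilon$ a.e.\ on $B_{2\eta}$. Choose $C_\eta>0$ with $C_\eta\eta^{-\beta_c}\leq\varepsilon$ and a smooth radial cutoff $j$ with $j\equiv 1$ on $[0,1]$, $\operatorname{supp} j\subset[0,2]$, and set $\theta:=\phis-C_\eta j(|\cdot|/\eta)w_*$, so that $\theta^-$ is supported in $B_\eta$. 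The calculation in the proof of Lemma \ref{closability2} shows $j(|\cdot|/\eta)w_*\in D(\calE_*)$, hence $\theta^-\in D(\calE_*)$. Combining the Dirichlet-form inequality $\calE_*[\theta^-]\leq-\calE_*(\theta,\theta^-)$, the eigenvalue identity $\calE_*(\phis,\theta^-)=\lam_0^*\int\phis\theta^-\,dx\geq 0$, and the pointwise harmonicity $(-\Delta)^{\alpha/2}w_*=c^*|x|^{-\alpha}w_*$ on $\Om\setminus\{0\}$ provided by Lemma \ref{harmonic}, one aims for $\calE_*[\theta^-]\leq 0$. The inequality (\ref{ISI}) then forces $\theta^-\equiv 0$, whence $\phis\geq C_\eta w_*$ on $B_\eta$.

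The main obstacle is that $(-\Delta)^{\alpha/2}$ is nonlocal, so Lemma \ref{harmonic} does not yield $(-\Delta)^{\alpha/2}(jw_*)=c^*|x|^{-\alpha}jw_*$ pointwise on $B_\eta$; a stray positive tail of the form $\calA(d,\alpha)\int(1-j(|y|/\eta))w_*(y)|x-y|^{-d-\alpha}\,dy$ appears in $\calE_*(jw_*,\theta^-)$ and has the wrong sign for the argument to close directly. The remedy is either to absorb this tail into the leading $\lam_0^*\int\phis\theta^-$ contribution via the a priori majorization $\theta^-\leq C_\eta j(|\cdot|/\eta)w_*$, or, as a safety net, to argue by subcritical approximation via (\ref{approxi}): apply Theorem \ref{comparison1}(1) to each $\phiok$, extract a constant $\varepsilon^{(k)}\eta^{\beta_k}$ that stays bounded away from zero (using uniform quantitative positivity on $B_{2\eta}$ from Theorem \ref{comparison1}(2) together with continuity of $\beta\mapsto F(\beta)$ from Lemma \ref{range}), and pass to the limit $c_k\uparrow c^*$ to arrive at $\phis\geq cw_*$ on $B_\eta$.
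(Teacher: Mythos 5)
Your fallback route by subcritical approximation via (\ref{approxi}) is exactly the paper's own proof, and your reduction of assertion~2 to assertion~1 together with the bounds established in the paragraph preceding the theorem is a cleaner formulation of what the paper compresses into ``assertion~2 was already proved.'' Your primary route --- a direct cutoff-and-test argument at $c=c^*$, built on the improved Sobolev inequality (\ref{ISI}) and the Dirichlet-form structure of $\calE_*$ from Lemma~\ref{closability2} --- is a genuinely different and more self-contained argument, and the obstacle you flag is real: the nonlocal tail $T(x)=\calA(d,\alpha)\int(1-j(|y|/\eta))w_*(y)|x-y|^{-d-\alpha}\,dy$ is nonnegative on $B_\eta$, so $C_\eta\,\calE_*(jw_*,\theta^-)=C_\eta\int T\theta^-\geq 0$ works \emph{against} the desired sign $\calE_*[\theta^-]\leq 0$. (Incidentally, the same tail is silently dropped in the paper's displayed computation in the proof of Theorem~\ref{comparison1}(1), where the cutoff $j$ disappears between consecutive lines.) Your first remedy does repair this: $T$ is bounded by some finite $M$ on $B_\eta$, since $1-j\equiv 0$ on $[0,1]$ keeps the integrand away from its singularity, and $\phis\geq\varepsilon>0$ a.e.\ on $B_{2\eta}$ by irreducibility together with $T_t^*\geq T_t$; so one may additionally shrink $C_\eta$ until $C_\eta M\leq\lam_0^*\varepsilon$, which forces $-\lam_0^*\int\phis\theta^-+C_\eta\int T\theta^-\leq 0$ and closes the argument. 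What this buys over the paper's approach is a proof of assertion~1 directly at $c=c^*$, with no passage to the subcritical limit and hence no need to track uniformity of constants in $k$ --- a point your fallback correctly reduces to keeping $\int\delta^{\alpha/2}\phiok\,dx$ bounded away from zero, which indeed follows from $\phiok\to\phis$ in $L^2$.
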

\begin{proof} Assertion 2. was already proved. To prove the first assertion one uses Theorem \ref{main1}-3. together with the approximation result (\ref{approxi}).
\end{proof}
\begin{rk}{\rm
Theorem \ref{main-critical} indicates that $\phis\not\in \Wo$ and whence $D(\calE_*)$ contains strictly $\Wo$. Indeed, if $\phis\in\Wo$ then $\int |x|^{-\alp}(\phis)^2\,dx<\infty$. But for small $r>0$ we have
\begin{eqnarray}
\int |x|^{-\alp}(\phis)^2\,dx\geq C\int_{B_r}|x|^{-\alp}|x|^{-d+\alp}\,dx=\infty.
\end{eqnarray}
\label{strict}
}
\end{rk}
\section{Preservation of IUC}

In this section we shall discuss stability of the IUC property for the semigroups
\begin{eqnarray}
T_t^c:=e^{-tL_c},\ t>0,\ 0<c\leq c_*\ {\rm with}\ T_t^{c_*}=T_t^*.
\end{eqnarray}
Kulzycs proved (see \cite{kul}) that $T_t:=e^{-tL_0},\ t>0$ is IUC for every bounded domain, via Log-Sob inequality. We shall prove that this property still holds true for $T_t^*$ for a large class of bounded domains including, for instance $C^{1,1}$ domains.\\
One of the crucial ingredients for the proof is the following Hardy-type inequality which we assume its occurrence for the rest of the paper: There is a finite constant $C_H>0$ such that
\begin{eqnarray}
\int\frac{f^2(x)}{\varp_0^2(x)}\,dx\leq C_H\calE[f],\ \forall\,f\in\Wo.
\label{hardy}
\end{eqnarray}
\begin{rk}{\rm
The latter inequality holds true for bounded domains satisfying the uniform interior ball condition and  $d\geq 2,\ \alp\neq 1$. Indeed for this class of   domains we already observed that
\begin{eqnarray}
\varp_0\geq C\delta^{\alpha/2},
\label{LB}
\end{eqnarray}
whereas \cite[Corollary 2.4]{song} asserts  that if $\Om$ is a Lipschitz domain then  for every $\alp\neq 1$ and $d\geq 2$ we have
\begin{eqnarray}
\int\frac{f^2(x)}{\delta^\alpha(x)}\,dx\leq C_H\calE_\Om[f],\ \forall\,f\in\Wo,
\label{LB2}
\end{eqnarray}
Combining the two inequalities yields (\ref{hardy}).

}
\end{rk}
Hereafter we shall designate by $Q^{\phis}$ the $\phis$-transform of $\calE_*$ and $\calC^{\phis}$ its form core defined by
\begin{eqnarray}
\calC^{\phis}:=\big\{f\colon\,w_*f\in \Wo\big\}.
\end{eqnarray}
Following the spirit of the proof of Lemma \ref{closability} together with Lemma \ref{closability2}, we derive the expression of $\calE^{\phis}$ on $\calC^{\phis}$:
\begin{eqnarray}
\calE^{\phis}[f]=\il\il \frac{(f(x)-f(y))^2}{|x-y|^{d+\alpha}}\phis(x)\phis(y)\,dxdy
+\lam_0^*\int(f\phis)^2(x)\,dx
,\ \forall\,f\in\calC^{\phis}.
\label{EC}
\end{eqnarray}
We also recall the known that since $T_t$ is IUC then there is a finite constant $C_G$ such that
\begin{eqnarray}
G(x,y)\geq C_G\varp_0(x)\varp_0(y).
\label{greeno}
\end{eqnarray}

\begin{lem} There is a finite constant $\tilde{C_G}$ such that
\begin{eqnarray}
\phis\geq\tilde{C_G}\varp_0\ a.e.
\end{eqnarray}
\label{fis}
\end{lem}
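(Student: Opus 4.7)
The plan is to combine the ground state representation (Lemma \ref{ground-rep2}) with the pointwise lower bound (\ref{greeno}) on the free Green kernel that comes from the IUC of $T_t$. Since $V_*\geq 0$ a.e.\ and $K$ is a positivity-preserving operator, dropping the first (non-negative) term in Lemma \ref{ground-rep2} yields the a.e.\ inequality
\begin{eqnarray*}
\phis(x)\ \geq\ \lam_0^*\,K\phis(x)\ =\ \lam_0^*\int_{\Om}G(x,y)\,\phis(y)\,dy.
\end{eqnarray*}

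Next, I would plug the bound $G(x,y)\geq C_G\,\varp_0(x)\varp_0(y)$ from (\ref{greeno}) into this integral. Because the right-hand side factorises in $x$ and $y$, this produces
\begin{eqnarray*}
\phis(x)\ \geq\ \lam_0^*\,C_G\,\varp_0(x)\int_{\Om}\varp_0(y)\phis(y)\,dy\quad\text{a.e.},
\end{eqnarray*}
so the constant $\tilde{C_G}:=\lam_0^*C_G\int_{\Om}\varp_0(y)\phis(y)\,dy$ will do, provided it is shown to be finite and strictly positive.

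Finiteness is Cauchy--Schwarz: $\varp_0\in L^2(\Om,dx)$ by its normalisation and $\phis\in L^2(\Om,dx)$ as the $L^2$-normalised ground state of $L_*$ (the membership $\phis\in L^2$ is also transparent from Theorem \ref{main-critical}, since the singularity $|x|^{-(d-\alpha)/2}$ near the origin is square-integrable in dimension $d$, and $\phis$ is bounded away from $0$). Strict positivity is immediate from Theorem \ref{ground-critical}(ii) together with the a.e.\ positivity of $\varp_0$. I do not expect a real obstacle here; the only mildly delicate point is making sure one may drop the term $K(V_*\phis)$, which is legitimate because $V_*\geq 0$, $\phis\geq 0$ and $G\geq 0$ so the representation formula is a sum of non-negative quantities.
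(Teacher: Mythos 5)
Your proof is correct and follows exactly the same route as the paper: drop the nonnegative term $K(V_*\phis)$ in Lemma \ref{ground-rep2}, then insert the bound (\ref{greeno}) into $\lam_0^*K\phis$ and factor out $\varp_0(x)$. You go slightly beyond the paper by explicitly verifying that $\tilde{C_G}$ is finite and strictly positive, which the paper leaves implicit.
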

\begin{proof} Lemma \ref{ground-rep2} together with the observation (\ref{greeno}) yield
\begin{eqnarray}
\phis\geq\lam_0^*K\phis\geq\lam_0^*C_G\varp_0\int\varp_0(y)\phis(y)\,dy,
\end{eqnarray}
which was to be proved.
\end{proof}

\begin{theo} The semigroups $T_t^c,\ t>0,\ 0<c\leq c_* $ are IUC.
\label{IUC}
\end{theo}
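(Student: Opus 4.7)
The plan is to characterize IUC of $T_t^c$ via the ultracontractivity of the intrinsic ground-state-transformed semigroup $S_t^c := (\varp_0^c)^{-1} e^{-t(L_c - \lam_0^c)} \varp_0^c$ on $L^2(\Om, (\varp_0^c)^2\,dx)$, and then to establish that ultracontractivity through a Sobolev-type embedding for the associated intrinsic Dirichlet form. This is exactly the Varopoulos-style strategy recalled in Theorem \ref{UC} via \cite[Thms 4.1.2--4.1.3]{saloff-coste}, applied now with the intrinsic weight $\varp_0^c$ in place of $w$.

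For the subcritical regime $0<c<c^*$, the sharp Hardy inequality (\ref{HI}) yields $\calE_c \ge (1-c/c^*)\,\calE_\Om$ on $\Wo$, so the free Sobolev embedding (\ref{sobo-free}) transfers directly to $\calE_c$. Combined with the assumed Hardy (\ref{hardy}) and the ground-state comparison $\varp_0^c \ge C_c\,\varp_0$ (readily deduced from Theorem \ref{main1} under the uniform interior ball condition, using $\varp_0 \sim \delta^{\alpha/2}$ and $w_{\beta(c)}$ bounded below on a bounded $\Om$), the H\"older interpolation scheme of Theorem \ref{UC} yields the weighted Sobolev bound required for the $\varp_0^c$-intrinsic form.

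For the critical regime $c=c^*$, sharpness of (\ref{HI}) precludes any inequality $\calE_* \ge \eta\,\calE$ with $\eta>0$, so the above reduction fails. The remedy is to substitute the Frank--Lieb--Seiringer improved Sobolev inequality (\ref{ISI}), $\|f\|_{L^p}^2 \le S_{d,\alp}(\Om)\,\calE_*[f]$ for $2<p<\frac{2d}{d-\alp}$, for the free Sobolev. Simultaneously, Lemma \ref{fis} gives $\phis \ge \tilde{C_G}\varp_0$, whence the assumed (\ref{hardy}) produces
$$\int \frac{f^2}{\phis^{\,2}}\,dx \;\le\; \frac{C_H}{\tilde{C_G}^2}\,\calE[f], \qquad f\in\Wo.$$
Interpolating these two inputs via H\"older's inequality, precisely as in (\ref{l311})--(\ref{1333}) in the proof of Theorem \ref{UC}, produces a weighted Sobolev embedding
$$\|f\|_{L^q((\phis)^2 dx)}^2 \;\le\; C\,\Bigl(Q^{\phis}[f] - \lam_0^*\int f^2(\phis)^2\,dx\Bigr) \;+\; C'\,\|f\|_{L^2((\phis)^2 dx)}^2$$
for some $q>2$. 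By \cite[Thms 4.1.2--4.1.3]{saloff-coste} this gives ultracontractivity of $S_t^{c^*}$, which is equivalent to IUC of $T_t^{c^*}$.

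The central obstacle is unambiguously the critical case: the sharpness of (\ref{HI}) blocks the naive perturbative reduction to $\calE_\Om$, and the unboundedness of $\phis$ at the origin (Theorem \ref{main-critical}) together with the strict enlargement of the form core beyond $\Wo$ (Remark \ref{strict}) complicate the weighted interpolation. The joint use of Lemma \ref{fis}, the assumed Hardy (\ref{hardy}), and the improved Sobolev (\ref{ISI}) is what resolves all three difficulties in concert.
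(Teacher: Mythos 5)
Your overall architecture — ultracontractivity of the intrinsic ground-state-transformed semigroup, established by a weighted Sobolev inequality obtained from the improved Sobolev inequality (\ref{ISI}) interpolated against a Hardy-type input, with Lemma \ref{fis} supplying the lower bound $\phis\geq\tilde C_G\varp_0$ — is indeed the paper's architecture, and your subcritical reduction (where $\calE_c$ and $\calE_\Om$ are two-sided comparable) is sound. However, in the critical case there is a genuine gap at the one step that is actually delicate: producing the spectral-gap-type bound $\int f^2\,dx\leq C\,Q^{\phis}[f]$ needed as the second factor in the H\"older scheme of (\ref{l311})--(\ref{1333}).

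The input you propose to feed in, namely the deduction from (\ref{hardy}) and Lemma \ref{fis} that
\begin{equation*}
\int\frac{f^2}{(\phis)^2}\,dx\;\leq\;\frac{C_H}{\tilde C_G^{\,2}}\,\calE_\Om[f],\qquad f\in\Wo,
\end{equation*}
cannot close the argument. Converting it by the Doob substitution $f\mapsto\phis f$ yields only $\int f^2\leq C\,\calE_\Om[\phis f]$, and since $\calE_*\leq\calE_\Om$ (the critical potential is subtracted) one has $\calE_\Om[\phis f]\geq\calE_*[\phis f]=Q^{\phis}[f]$; the inequality points the wrong way and gives no control of $\int f^2$ by $Q^{\phis}[f]$. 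In the subcritical case this is healed by the two-sided comparability $\calE_c\sim\calE_\Om$; precisely, the critical case is the one where that crutch is removed, and your argument does not replace it.

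The paper's proof instead routes through the \emph{ground state representation} of the $\varp_0$-transform of the free form: from (\ref{hardy}), $\int f^2\leq C_H\,\calE_\Om[\varp_0 f]=C_H\,\calE^{\varp_0}[f]$, and since $L_0\varp_0=\lam_0\varp_0$, the form $\calE^{\varp_0}$ admits the representation as a pure jump form with kernel density $\varp_0(x)\varp_0(y)$ plus $\lam_0\int(f\varp_0)^2$. Each of these nonnegative terms is then dominated \emph{pointwise in the kernel} by the corresponding term of $Q^{\phis}$ in (\ref{EC}), using $\varp_0(x)\varp_0(y)\leq\tilde C_G^{-2}\phis(x)\phis(y)$ from Lemma \ref{fis}. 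This term-by-term comparison, available precisely because both transformed forms are written as jump forms with explicit weights, is what yields $\int f^2\leq C\,Q^{\phis}[f]$ and is the missing ingredient in your outline; monotonicity of the scalar forms $\calE_\Om$ and $\calE_*$ alone does not suffice.
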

\begin{proof} We shall give the proof only  for $T_t^*$. For the other cases the proof is similar.\\
We shall show that $T_t^{\phis},\ t>0$ is ultracontractive. Various constants in this proof will be denoted simply by $C$.\\
On one hand, thanks to the improved Sobolev inequality together with formula (\ref{EC}) we obtain
\begin{eqnarray}
\big(\int(|f|\phis)^p\,dx\big)^{2/p}&\leq& C\big(\il\il \frac{(f(x)-f(y))^2}{|x-y|^{d+\alpha}}\phis(x)\phis(y)\,dxdy\nonumber\\ &+&\lam_0^*\int(f\phis)^2(x)\,dx\big),\ \forall\,f\in\calC^{\phis}.
\end{eqnarray}
Meanwhile, Lemma \ref{fis} together with the lower bound on $\varp_0$ in (\ref{LB}) lead to
\begin{eqnarray}
 \phis\geq C\varp_0,
\end{eqnarray}
which in conjunction with inequality (\ref{hardy}) leads to
\begin{eqnarray}
\int f^2(x)\,dx&\leq& C_H\calE^{\varp_0}[f]
=C\big(\int\int\frac{(f(x)-f(y))^2}{|x-y|^{d+\alpha}}\varp_0(x)\varp_0(y)\,dx\,dy
\nonumber\\
&+&\lam_0\int (f\varp_0)^2(x)\,dx\big)
\leq CQ^{\phis}[f],\ \forall\,f\in\calC^{\phis}.
\end{eqnarray}
Now setting $r=2(2-p^{-1})$, where $p$ is the exponent of inequality (\ref{ISI}) we achieve by H\"older inequality
\begin{eqnarray}
\big(\int |f|^r(\phis)^2\,dx\big)^{2/r}\leq CQ^{\phis}[f],\ \forall\,f\in\calC^{\phis},
\end{eqnarray}
yielding a Sobolev embedding for $D(Q^{\phis})$, which in turns yields the ultracontractivity of $T_t^{\phis}$, which was to be proved.
\end{proof}
The latter theorem  leads directly to
\begin{coro}\begin{itemize}\item[i)] The large time asymptotic for $p_t^*$:
\begin{eqnarray}
p_t^*(x,y)\sim e^{-\lam_0^*t}\delta^{\alpha/2}(x)\delta^{\alpha/2}(y)w_*(x)w_*(y),\ a.e.\ {\rm for\ large}\ t.
\end{eqnarray}
\item[ii)] Green kernel lower bound
\begin{eqnarray}
G_*(x,y)\geq C\delta^{\alpha/2}(x)\delta^{\alpha/2}(y)w_*(x)w_*(y),\ a.e.
\end{eqnarray}
\item[iii)] Let $f\in L^2,\ f\geq 0\ a.e.$ and $u$ the weak solution of $L_*u=f$. Then
\begin{eqnarray}
u\geq C\delta^{\alpha/2}w_*,\ a.e.
\end{eqnarray}
\end{itemize}
\end{coro}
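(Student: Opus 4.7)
The plan is to derive all three parts from the intrinsic ultracontractivity of $T_t^*$, established in Theorem \ref{IUC}, combined with the sharp two-sided bound $\phis\sim\delta^{\alpha/2}w_*$ from Theorem \ref{main-critical}. The corollary is essentially a repackaging of these two facts through well-known consequences of IUC.

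For assertion (i), I would invoke the standard Davies--Simon large-time heat-kernel asymptotic for IUC semigroups: whenever a positivity-preserving semigroup is IUC and its generator has compact resolvent with a simple strictly positive normalized ground state $\phi_0$ at eigenvalue $\lambda_0$, one has
\begin{eqnarray*}
\frac{p_t(x,y)}{\phi_0(x)\phi_0(y)}\to e^{-\lambda_0 t},\quad t\to\infty,
\end{eqnarray*}
uniformly on $\Om\times\Om$. The necessary ingredients---compact resolvent and the simple ground state $\phis$---are provided by Theorem \ref{ground-critical}(i)-(ii). Substituting the estimate $\phis\sim\delta^{\alpha/2}w_*$ from Theorem \ref{main-critical} yields the claimed equivalence for large $t$.

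For assertion (ii), the same IUC property (this time used at the level of the resolvent rather than asymptotically) gives a lower Green-kernel bound in terms of the ground state. Concretely, from (i) there is $T>0$ and $C_0>0$ such that $p_t^*(x,y)\ge C_0 e^{-\lam_0^*t}\phis(x)\phis(y)$ for every $t\geq T$; integrating in $t$ from $T$ to $\infty$ and using $G_*(x,y)=\int_0^\infty p_t^*(x,y)\,dt$ produces
\begin{eqnarray*}
G_*(x,y)\ge \frac{C_0}{\lam_0^*}e^{-\lam_0^*T}\phis(x)\phis(y),
\end{eqnarray*}
and then Theorem \ref{main-critical} delivers the desired bound in terms of $\delta^{\alpha/2}w_*$. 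This is precisely the analogue of (\ref{greeno}) for $L_*$.

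For assertion (iii), any weak solution of $L_*u=f$ with $f\in L^2$ is given by $u=K_*f=\int_\Om G_*(\cdot,y)f(y)\,dy$. Invoking (ii),
\begin{eqnarray*}
u(x)\ge C\delta^{\alpha/2}(x)w_*(x)\int_\Om \delta^{\alpha/2}(y)w_*(y)f(y)\,dy,
\end{eqnarray*}
and the remaining integral is a finite positive constant once one knows that $\delta^{\alpha/2}w_*\in L^2(\Om)$ (which holds for bounded $\Om$ since $w_*(y)=|y|^{-(d-\alpha)/2}$ is square-integrable near the origin, as $-(d-\alpha)<-d/2 \cdot 0$ … more directly, $\int_{B_\epsilon}|y|^{-(d-\alpha)}\,dy<\infty$ because $d-\alpha<d$). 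I would assume throughout that $f\not\equiv 0$, so that the integral is strictly positive, absorbing its value into the constant $C$. The only non-routine point in the whole argument is (i)—that is, applying the Davies--Simon asymptotic—but once Theorems \ref{ground-critical} and \ref{IUC} are in place, it is available off the shelf, and the rest is an integration and a substitution.
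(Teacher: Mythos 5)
Your proposal is correct and takes the same route the paper intends: the paper offers no explicit proof and simply states that Theorem \ref{IUC} ``leads directly to'' the corollary, and what you have written out is precisely the standard chain of implications---Davies--Simon large-time asymptotics for IUC semigroups combined with the two-sided ground-state estimate of Theorem \ref{main-critical} for (i), integration of the resulting heat-kernel lower bound in $t$ (the analogue of (\ref{greeno}) for $L_*$) for (ii), and the Green-kernel representation $u=K_*f$ plus Cauchy--Schwarz for (iii). One minor remark: in (iii) the constant $C$ you produce absorbs $\int_\Om\delta^{\alpha/2}w_* f\,dx$ and thus depends on $f$, with $C=0$ if $f\equiv 0$; your flag that $f\not\equiv 0$ is implicitly assumed is the right reading of the statement.
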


\begin{tabular}{ll}
A.~Beldi &  \quad N.~Belhadjrhouma\\
Faculty of Sciences of Tunis, Tunisia &  \quad Faculty of Sciences of Tunis, Tunisia\\
{\small beldiali@gmail.com} &  \quad
{\small nedra.belhadjrhouma@fst.rnu.tn} \\
& \\
& \\
A.~BenAmor & \\
 Faculty of Sciences of Gab\`es, Tunisia & \\
{\small ali.benamor@ipeit.rnu.tn } &
\end{tabular}

\bibliographystyle{alpha}
\bibliography{biblio-frac-V1}

\begin{thebibliography}{BAB11}

\bibitem[BAB11]{bbb}
Hichem BelHadjAli, Ali~Ben Amor, and Johannes~F. Brasche.
\newblock Large coupling convergence: Overview and new results.
\newblock In I.~Gohberg, Michael Demuth, Bert-Wolfgang Schulze, and Ingo Witt,
  editors, {\em Partial Differential Equations and Spectral Theory}, volume 211
  of {\em Operator Theory: Advances and Applications}, pages 73--117. Springer
  Basel, 2011.

\bibitem[Ba{\~n}91]{banuelos91}
Rodrigo Ba{\~n}uelos.
\newblock Intrinsic ultracontractivity and eigenfunction estimates for
  {S}chr\"odinger operators.
\newblock {\em J. Funct. Anal.}, 100(1):181--206, 1991.

\bibitem[BB12]{benamor-forum2012}
Nedra Belhajrhouma and Ali BenAmor.
\newblock {H}ardy's inequality in the scope of {D}irichlet forms.
\newblock {\em Forum Math.}, 24(4):751--767, 2012.

\bibitem[BBB]{benamor-osaka}
Ali Beldi, Nedra Belhajrhouma, and Ali BenAmor.
\newblock Pointwise estimates for the ground states of some classes of
  positivity preserving operators.
\newblock {\em Osaka J. Math.}, To appear.

\bibitem[BBC03]{bogdan}
Krzysztof Bogdan, Krzysztof Burdzy, and Zhen-Qing Chen.
\newblock Censored stable processes.
\newblock {\em Probab. Theory Related Fields}, 127(1):89--152, 2003.

\bibitem[BH86]{bliedtner}
J.~Bliedtner and W.~Hansen.
\newblock {\em Potential theory}.
\newblock Universitext. Springer-Verlag, Berlin, 1986.
\newblock An analytic and probabilistic approach to balayage.

\bibitem[CG98]{cipriani-grillo}
F.~Cipriani and G.~Grillo.
\newblock Pointwise properties of eigenfunctions and heat kernels of
  {D}irichlet-{S}chr\"odinger operators.
\newblock {\em Potential Anal.}, 8(2):101--126, 1998.

\bibitem[CKS10]{kim2010}
Zhen-Qing Chen, Panki Kim, and Renming Song.
\newblock Heat kernel estimates for the {D}irichlet fractional {L}aplacian.
\newblock {\em J. Eur. Math. Soc. (JEMS)}, 12(5):1307--1329, 2010.

\bibitem[CS03]{song}
Zhen-Qing Chen and Renming Song.
\newblock {H}ardy inequality for censored stable processes.
\newblock {\em Tohuku Math.J.}, 55, 2003.

\bibitem[Dav89]{davies-book}
Eduard~B. Davies.
\newblock {\em Heat kernels and spectral theory}.
\newblock Cambridge University Press, Cambridge, 1989.

\bibitem[DD03]{davilla-dupaigne}
Juan D{\'a}vila and Louis Dupaigne.
\newblock Comparison results for {PDE}s with a singular potential.
\newblock {\em Proc. Roy. Soc. Edinburgh Sect. A}, 133(1):61--83, 2003.

\bibitem[DN02]{dupaigne-nedev}
Louis Dupaigne and Gueorgui Nedev.
\newblock Semilinear elliptic {PDE}'s with a singular potential.
\newblock {\em Adv. Differential Equations}, 7(8):973--1002, 2002.

\bibitem[Fit00]{fitz00}
P.J. Fitzsimmons.
\newblock {Hardy's inequality for Dirichlet forms.}
\newblock {\em J. Math. Anal. Appl.}, 250(2):548--560, 2000.

\bibitem[FLS08]{frank}
Rupert~L. Frank, Elliott~H. Lieb, and Robert Seiringer.
\newblock Hardy-{L}ieb-{T}hirring inequalities for fractional {S}chr\"odinger
  operators.
\newblock {\em J. Amer. Math. Soc.}, 21(4):925--950, 2008.

\bibitem[F{\=O}T94]{fuku-oshima}
Masatoshi Fukushima, Y{\=o}ichi {\=O}shima, and Masayoshi Takeda.
\newblock {\em Dirichlet forms and symmetric {M}arkov processes}.
\newblock Walter de Gruyter \& Co., Berlin, 1994.

\bibitem[Han06]{Hansen-comp}
Wolfhard Hansen.
\newblock Global comparison of perturbed {G}reen functions.
\newblock {\em Math. Ann.}, 334(3):643--678, 2006.

\bibitem[Kul98]{kul}
Tadeusz Kulczycki.
\newblock Intrinsic ultracontractivity for symmetric stable processes.
\newblock {\em Bull. Polish Acad. Sci. Math.}, 46(3):325--334, 1998.

\bibitem[MR92]{roeckner-ma}
Zhi~Ming Ma and Michael R{\"o}ckner.
\newblock {\em Introduction to the theory of (nonsymmetric) {D}irichlet forms}.
\newblock Springer-Verlag, Berlin, 1992.

\bibitem[SC02]{saloff-coste}
Laurent Saloff-Coste.
\newblock {\em Aspects of {S}obolev-type inequalities}, volume 289 of {\em
  London Mathematical Society Lecture Note Series}.
\newblock Cambridge University Press, Cambridge, 2002.

\bibitem[VZ00]{zuazua}
Juan~Luis Vazquez and Enrike Zuazua.
\newblock The {H}ardy inequality and the asymptotic behaviour of the heat
  equation with an inverse-square potential.
\newblock {\em J. Funct. Anal.}, 173(1):103--153, 2000.

\bibitem[Yaf99]{yafaev}
D.~Yafaev.
\newblock Sharp constants in the {H}ardy-{R}ellich inequalities.
\newblock {\em J. Funct. Anal.}, 168(1):121--144, 1999.

\end{thebibliography}

\end{document}